\newcommand*{\rom}[1]{\expandafter\@slowromancap\romannumeral #1@}
 \newtheorem{theorem}{Theorem}[section]
  \newtheorem{corollary}[theorem]{Corollary}
  \newtheorem{lemma}[theorem]{Lemma}
  \newtheorem{introthm}{Theorem}
  \newtheorem{introcor}[introthm]{Corollary}
  \theoremstyle{definition}
  \newtheorem{definition}[theorem]{Definition}
  \newtheorem*{claim*}{Claim}
    \newtheorem{notation}[theorem]{Notation}
  \newtheorem*{question*}{Question}
  \newtheorem*{answer*}{Answer}
  \newtheorem*{application*}{Application}
  \theoremstyle{remark}
  \newtheorem{remark}[theorem]{Remark}
  \newtheorem*{remark*}{Remark}
\newcommand{\RNum}[1]{\uppercase\expandafter{\romannumeral #1\relax}}
\DeclarePairedDelimiterX{\Norm}[1]{\lVert}{\rVert}{#1}
 \newcommand{\from}{\colon\thinspace} 
\theoremstyle{definition}
  \newcommand{\sC}{{\sf C}}
    \newcommand{\E}{{\mathbb{e}}}
  \newcommand{\mm}{{\sf m}}   
  \newcommand{\nn}{{\sf n}}
  \newcommand{\gothic}{\mathfrak}
  \newcommand{\go}{{\gothic o}}
  \newcommand{\calN}{\mathcal{N}}
  \newcommand{\ST}{\mathbin{\Big|}} 
\begin{document}

\title{Convergence of sublinearly contracting horospheres}

  \author   {Abdul Zalloum}
 \address{Department of Mathematics, Queen's University, Kingston, ON }
 \email{az32@queensu.ca}
\begin{abstract}
   In \cite{QR19}, Qing, Rafi and Tiozzo introduced the sublinearly contracting boundary for CAT(0) spaces. Every point of this boundary is uniquely represented by a sublinearly contracting geodesic ray: a geodesic ray $b$ where every disjoint ball projects to a subset whose diameter is bounded by a sublinear function in terms of the ball's distance to the origin. This paper analyzes the bahaviour of horofunctions associated to such geodesic rays, for example, we show that horospheres associated to such horofunctions are convergent. As a consequence of this analysis, we show that for any proper CAT(0) space $X$, every point of the visual boundary $\partial X$ that is defined by a sublinearly contracting geodesic ray is a visibility point.

\end{abstract}

\maketitle

\section{Introduction}

Recently, Qing, Rafi and Tiozzo \cite{QR19} introduced the notion of sublinearly contracting boundaries for CAT(0) spaces and were able to show that for a proper CAT(0) space, such a boundary is a metrizable topological space which is invariant under quasi-isometries. For a sublinear function $\kappa$, a geodesic ray $c$ is said to be \emph{$\kappa$-contracting} if there exists a constant $\nn \geq0$ such that for any ball $B$ centered at $x$ and disjoint from $b$, we have $diam(\pi_b(B)) \leq \nn \kappa(||x||),$ where $||x||=d(x, b(0))$. A geodesic ray is \emph{sublinearly contracting} if it's $\kappa$-contracting for some sublinear function $\kappa$. Given a geodesic ray $c$, a \emph{horofunction} associated to such a geodesic ray, denoted by $h,$ is given by $h(x)=\underset{t \rightarrow \infty}{\lim}[d(x,c(t))-t] +\nn$ for some constant $\nn \geq 0$. An \emph{$h$-horoball} is defined to be the set $h^{-1}((-\infty,-k])$ for some $k \in \mathbb{R}^{+}.$ Similarly, an \emph{$h$-horosphere}, denoted by $H_k,$ is the set $h^{-1}(-k)$ for some $k \in \mathbb{R}^{+}.$ Let $h$ be a horofunction associated to the geodesic ray $c$ and let $\zeta=c(\infty) \in \partial X;$ the visual boundary of $X.$ A sequence of $h$-horospheres $\{H_k\}_{k \in \mathbb{N}}$ is said to be \emph{convergent} if whenever $x_k \in H_k,$ we have $x_k \rightarrow \zeta.$ See Figure \ref{fig: convergence of horospheres}.

\begin{figure}\label{fig: convergence of horospheres}
    \centering

\begin{tikzpicture}[scale=.3]

\draw[very thick,black] (0,0) circle (3cm);

\draw[thick, red, ->] (0.05,0) -- (0.05,3);

\draw[very thick,blue] (0,2.6) circle (.4cm);
\draw[very thick,blue] (0,2.3) circle (.7cm);
\draw[very thick,blue] (0,2) circle (1cm);
\draw[very thick,blue] (0,1.7) circle (1.3cm);
\draw[very thick,blue] (0,1.4) circle (1.6cm);
\draw[very thick,blue] (0,1.1) circle (1.9cm);

\draw[thick,fill=black] (0,3) circle (.1cm);

\node[above] at (.5,3) {$\zeta$};
\node[below] at (.5,-3) {$\mathbb{H}^2$};

\end{tikzpicture}
\begin{tikzpicture}[scale=.2]
\draw[thin,<->] (-5,0) -- (9,0);
\draw[thin,->] (0,0) -- (0,9);

\draw[thick, red, ->] (0,0) -- (0,9);
\node[above] at (0,9) {$\eta$};

\draw[thin, blue, <->] (-5,1) -- (9,1);
\draw[thin, blue, <->] (-5,2) -- (9,2);
\draw[thin, blue, <->] (-5,3) -- (9,3);
\draw[thin, blue, <->] (-5,4) -- (9,4);
\draw[thin, blue, <->] (-5,5) -- (9,5);
\draw[thin, blue, <->] (-5,6) -- (9,6);
\draw[thin, blue, <->] (-5,7) -- (9,7);
\node[below] at (0,0) {$\mathbb{R}^2$};

\draw [fill=green] (0,0) circle (.2cm);
\draw [fill=green] (1,1) circle  (.2cm);
\draw [fill=green] (2,2) circle  (.2cm);

\draw [fill=green] (3,3) circle  (.2cm);
\draw [fill=green] (4,4) circle  (.2cm);
\draw [fill=green] (5,5) circle  (.2cm);

\draw [fill=black] (-2,0) circle  (.2cm);
\draw [fill=black] (-2,1) circle  (.2cm);
\draw [fill=black] (-2,2) circle (.2cm);
\draw [fill=black] (-2,3) circle (.2cm);
\draw [fill=black] (-2,4) circle (.2cm);
\draw [fill=black] (-2,5) circle (.2cm);

\draw[thin, white, ->] (-10,5) -- (-15,4);

\end{tikzpicture}

\caption{The blue horospheres associated to the point $\zeta$ in the boundary of $\mathbb{H}^2$ are convergent . On the other hand, the blue horospheres associated to $\eta$ in the boundary of $\mathbb{R}^2$ are not.} \label{fig: convergence of horospheres}
\end{figure}
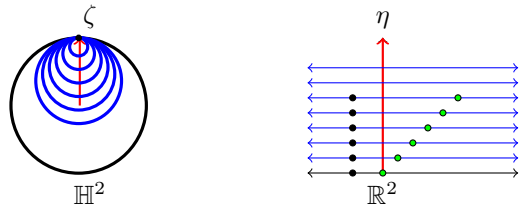

\begin{introthm}\label{main theorem}
Let $X$ be a proper CAT(0) space and let $\partial X$ be its visual boundary. Suppose that $c,c'$ are two distinct geodesic rays such that $c(0)=c'(0)$ and $c$ is $\kappa$-contracting. For any two distinct $\zeta,\eta \in \partial X $ where $\zeta=c(\infty), \eta=c'(\infty)$, if $h$ is a horofunction associated to the geodesic ray $c$ and $H_k=h^{-1}(-k)$ , then we have the following:
\begin{enumerate}
    \item Every sequence $x_k$ with $h(x_k) \rightarrow -\infty $, must satisfy $x_k \rightarrow \zeta.$ In particular, if $\{H_k\}_{k \in \mathbb{N}}$ is a sequence of $h$-horospheres and $x_k \in H_k,$ then $x_k \rightarrow \zeta.$ That is to say, any sequence of $h$-horospheres is convergent.

    \item If $B$, $B'$ are horoballs centered at $\zeta,\eta$ respectively, then $B \cap B'$ is bounded.
    \item If $h'$ is a horofunction associated to $c'$, then $h(c'(s)) \rightarrow \infty$ and $ h'(c(t))  \rightarrow \infty$,   as $s,t \rightarrow \infty.$
 
\end{enumerate}

\end{introthm}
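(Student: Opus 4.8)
The plan is to exploit the $\kappa$-contracting property of $c$ as the single engine behind all three conclusions, reducing each statement to a quantitative estimate on how fast $h$ decays along $c$ versus how slowly it can decay elsewhere. The starting point is the basic identity $h(c(t)) = -t + \nn$, which shows $h$ attains value $-k$ on $c$ at parameter roughly $k$. For part (1), given a sequence $x_k$ with $h(x_k)\to-\infty$, I would first observe that $h(x_k)\le -k$ forces $d(x_k, c(0))\ge k - O(1)$, so $x_k$ escapes to infinity; it then suffices to show that the geodesic $[c(0),x_k]$ fellow-travels an initial segment of $c$ of length tending to infinity, which is exactly what convergence to $\zeta$ in the visual boundary means. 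Here I would use $\kappa$-contraction: if $[c(0),x_k]$ diverged from $c$ early, the nearest-point projection $\pi_c(x_k)$ would be bounded, and then a standard CAT(0) convexity/comparison argument would give $h(x_k) = \lim_t [d(x_k,c(t)) - t] \ge d(x_k,\pi_c(x_k)) - \|\pi_c(x_k)\| - O(\kappa(\|\pi_c(x_k)\|))$, which is bounded below — contradicting $h(x_k)\to-\infty$. So the contrapositive does the work.

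For part (2), I would set up the same projection estimate symmetrically. Suppose $B = h^{-1}((-\infty,-k])$ and $B' = (h')^{-1}((-\infty,-k'])$ for horofunctions $h,h'$ associated to $c,c'$, and suppose $x\in B\cap B'$. From $x\in B$ and the argument of part (1), $\pi_c(x)$ is far out along $c$, with $\|\pi_c(x)\| \gtrsim k$ and $x$ within $O(\kappa(\|x\|))$ of $c$; roughly, $x$ lies near the ray $c$. Symmetrically, if $B'$ were a sublinearly contracting horoball the same would hold for $c'$, but in general only $c$ is assumed $\kappa$-contracting — so instead I would argue directly that a point lying simultaneously deep in $B$ and in $B'$ would force a point of $c'$ (namely a nearest point on $c'$ to $x$) to be close to $c$ at large distance from the origin; but $\zeta \ne \eta$ means $c$ and $c'$ diverge, and $\kappa$-contraction of $c$ turns "$c'$ comes close to $c$ far out" into a bound on the projection $\pi_c(c'(s))$, pinning $c'(s)$ to stay near $c(0)$ or forcing bounded overlap. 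Quantitatively: $h(x)\le -k$ and Lemma-type estimates give $\|x\| \le 2\|\pi_c(x)\| + h(x) + O(\kappa)$, and a parallel inequality from $x\in B'$ combine to bound $\|x\|$, hence $B\cap B'$.

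Part (3) is the cleanest: it is essentially the statement that the Busemann function of $c$ is unbounded above along $c'$, and vice versa. For $h(c'(s))\to\infty$: since $c'(\infty) = \eta \ne \zeta = c(\infty)$, the Gromov product (or the comparison angle at $c(0)$) between $c(t)$ and $c'(s)$ stays bounded, so $\pi_c(c'(s))$ is bounded — by $\kappa$-contraction, $d(c'(s), \pi_c(c'(s))) \ge s - O(\kappa)$, and then $h(c'(s)) = \lim_t[d(c'(s),c(t)) - t] \ge \|c'(s)\| - \|\pi_c(c'(s))\| - O(\kappa(\|\pi_c(c'(s))\|)) \to \infty$ since $\|c'(s)\| = s$ and $\|\pi_c(c'(s))\|$ is bounded. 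For the reverse direction $h'(c(t))\to\infty$, I cannot use $\kappa$-contraction of $c'$ directly, so I would instead deduce it from part (2): $h'(c(t))$ bounded above would put a tail of $c$ inside a horoball $B'$ centered at $\eta$, while that same tail lies in every horoball $B$ centered at $\zeta$ (since $h(c(t))\to-\infty$), contradicting the boundedness of $B\cap B'$. The main obstacle I anticipate is the asymmetry: $c$ is $\kappa$-contracting but $c'$ need not be, so every estimate must be arranged to only project onto $c$; getting part (3)'s second half to follow from (2) rather than from a direct estimate is the delicate organizational point, and I would be careful that the constants in the "$h(x) \ge \|x\| - 2\|\pi_c(x)\| - O(\kappa)$"-type inequality are genuinely uniform and come from a clean CAT(0) comparison lemma (proved earlier or folklore) rather than being fudged.
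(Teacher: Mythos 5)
Your overall strategy --- turn $\kappa$-contraction into the quantitative estimate $h(x)\ge d(x,x_c)-\Norm{x_c}-O(\kappa(x_c))$ and run everything through projections onto $c$ alone --- is workable for part (1) and the first half of part (3), and is genuinely more quantitative than the paper's route, which first shows that a $\kappa$-contracting ray has the \emph{bounded geodesic image} property ($\pi_c(c')$ is bounded for any ray $c'$ from the same basepoint), deduces part (3) from a single additive inequality $d(c'(s),c(t))\ge d(c'(s),c)+t-O(1)$, and then obtains (1) and (2) by soft arguments (properness, convexity of $h$ and of horoballs, limit rays). Two of your justifications are wrong as stated even though the conclusions are true: in part (3) you claim $\pi_c(c'(s))$ is bounded ``since the Gromov product between $c(t)$ and $c'(s)$ stays bounded because $\zeta\ne\eta$''; this is false in a general CAT(0) space (two distinct rays in $\mathbb{R}^2$ have Gromov product tending to infinity), and boundedness of the projection is itself a nontrivial consequence of $\kappa$-contraction (via $\kappa$-slimness plus the fact that a convex function vanishing at $0$ and sublinearly bounded on an unbounded set is identically zero), not of mere distinctness of endpoints. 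Similarly, in part (1) the implication ``$[\go,x_k]$ diverges from $c$ early $\Rightarrow$ $\pi_c(x_k)$ bounded'' needs that same slimness-plus-convexity argument and is not automatic.

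The genuine gap is part (2), and it propagates to the second half of part (3). Your ``parallel inequality from $x\in B'$'' does not exist: $c'$ is not assumed contracting, so there is no bound $\Norm{x}\le 2\Norm{\pi_{c'}(x)}+h'(x)+O(\kappa)$. Your fallback rests on the premise that a point deep in a horoball lies near the defining ray; that is false even in $\mathbb{H}^2$ (a horoball contains points arbitrarily far from any fixed ray into its center), so ``$x\in B\cap B'$ forces a point of $c'$ close to $c$ far out'' is unjustified. Since you then derive the second half of (3) from (2), nothing closes the loop. The paper avoids both problems: it proves $h'(c(t))\to\infty$ from the \emph{same} inequality used for $h(c'(s))\to\infty$ --- the bounded projection gives $d(c'(s),c(t))\ge s+t-2\nn_s-\nn-1$ with $\nn_s\le\nn$ uniformly, so one may subtract $s$ instead of $t$ and let $s\to\infty$ with $t$ fixed; no projection onto $c'$ is ever needed. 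Part (2) then follows softly: if $B\cap B'$ were unbounded, convexity of horoballs and properness would produce a geodesic ray $b\subseteq B\cap B'$ on which both $h$ and $h'$ are bounded above, and part (3) applied to $b$ versus $c$ and $b$ versus $c'$ forces $b(\infty)=\zeta=\eta$, a contradiction. You should reorganize so that both limits in (3) are established first and (2) is deduced from them.
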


\newpage
We remark that part (1) of Theorem \ref{main theorem}, extends Theorem 6.18 in \cite{Zal18} from the settings of contracting geodesic rays to sublinearly contracting ones. As an application of Theorem \ref{main theorem}, we obtain the following.

\begin{introcor}\label{introcor:visibility} Let $X$ be a proper CAT(0) space and let $\partial X$ be its visual boundary. Fix two distinct points $\zeta, \eta \in \partial X $ where $\zeta=c(\infty)$ and $c$ is sublinearly contracting. There is a geodesic line $l:(-\infty, \infty) \rightarrow \mathbb{R}$ with $l(-\infty)=\zeta$ and $l(\infty)=\eta.$ That is to say, sublinearly contracting geodesic rays define visibility points in the visual boundary $\partial X.$

\end{introcor}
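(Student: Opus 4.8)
The plan is to realize $l$ as a subsequential limit of the geodesic segments joining points far out along $c$ to points far out along a geodesic ray to $\eta$, using Theorem~\ref{main theorem} to confine these segments to a fixed bounded region, so that the limit is a genuine bi-infinite geodesic with the prescribed endpoints. First I would set things up: since $X$ is proper and complete there is a (unique) geodesic ray $c'$ with $c'(0)=c(0)=:o$ and $c'(\infty)=\eta$; as $\eta\neq\zeta$ the rays $c,c'$ are distinct, and $c$ is $\kappa$-contracting for some sublinear $\kappa$, so Theorem~\ref{main theorem} applies to the pair $(c,c')$. Let $h$ and $h'$ be the horofunctions associated to $c$ and $c'$, normalized so that $h(o)=h'(o)=0$; both are convex and $1$-Lipschitz, they satisfy $h(c(t))=h'(c'(t))=-t$, and by part (3) of Theorem~\ref{main theorem} one has $h(c'(s))\to\infty$ and $h'(c(t))\to\infty$ as $s,t\to\infty$. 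For $n\geq 1$ let $\gamma_n\colon[0,\ell_n]\to X$ be the geodesic from $c(n)=\gamma_n(0)$ to $c'(n)=\gamma_n(\ell_n)$, so that $\ell_n=d(c(n),c'(n))\leq 2n$.

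The key step — and the one I expect to be the main obstacle — is to show that for all large $n$ the segment $\gamma_n$ meets a fixed bounded set; concretely I would take $S:=\{h\le 0\}\cap\{h'\le 0\}$, show $\gamma_n\cap S\neq\emptyset$, and invoke Theorem~\ref{main theorem}(2) to see $S$ is bounded. Boundedness of $S$ is exactly part (2) of Theorem~\ref{main theorem} — intersections of horoballs about $\zeta$ and about $\eta$ are bounded, the level of the horoball being absorbed into the additive constant of the horofunction — and here the $\kappa$-contracting hypothesis is indispensable: already in the Euclidean plane $\{h\le 0\}\cap\{h'\le 0\}$ is unbounded whenever $\zeta,\eta$ are not antipodal, and correspondingly no geodesic between them exists, so the hypothesis cannot be dropped. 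That $\gamma_n$ meets $S$ follows from convexity together with part (3): for $n$ large, $h\circ\gamma_n$ is convex, is negative at $\gamma_n(0)=c(n)$ (where it equals $-n$) and positive at $\gamma_n(\ell_n)=c'(n)$, so it has a zero at some parameter $t_n$; comparing with the chord on $[0,t_n]$ gives $h\le 0$ there, while $1$-Lipschitzness gives $t_n=d(\gamma_n(t_n),c(n))\ge n$. Symmetrically, $h'\circ\gamma_n$ has a zero at some $t'_n$ with $h'\le 0$ on $[t'_n,\ell_n]$ and $t'_n\le \ell_n-n$. Since $t'_n\le \ell_n-n\le n\le t_n$, the point $m_n:=\gamma_n(t_n)$ satisfies $h(m_n)=0$ and $h'(m_n)\le 0$, so $m_n\in S$. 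Fix $R$ with $S\subseteq\overline B(o,R)$. (Most of the technical care here is in making the sign-change and chord estimates uniform in $n$, which works precisely because $\ell_n\le 2n$ forces $t'_n\le t_n$, and in phrasing part (2) in the level-independent form just used.)

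With $m_n\in\gamma_n\cap\overline B(o,R)$ the remainder is the standard Arzel\`a--Ascoli argument. Reparametrize $\gamma_n$ as $\sigma_n\colon[-a_n,b_n]\to X$ with $\sigma_n(0)=m_n$, $\sigma_n(-a_n)=c(n)$, $\sigma_n(b_n)=c'(n)$; then $a_n=t_n\ge n$ and $b_n=d(m_n,c'(n))\ge d(o,c'(n))-R=n-R$, so $a_n,b_n\to\infty$. The maps $\sigma_n$ are $1$-Lipschitz with $\sigma_n(0)$ in the compact set $\overline B(o,R)$, so after passing to a subsequence and diagonalizing they converge uniformly on compacta to a map $\sigma\colon\mathbb R\to X$; passing to the limit in $d(\sigma_n(s),\sigma_n(s'))=|s-s'|$ shows $\sigma$ is a geodesic line, and it is non-degenerate because $a_n,b_n\to\infty$. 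Finally $c(n)\to\zeta$ and $c'(n)\to\eta$ in the cone topology on $\overline X$ and $m_n\to\sigma(0)$; since in a proper $\mathrm{CAT}(0)$ space geodesics depend continuously on their (possibly ideal) endpoints, the geodesics $[m_n,c(n)]$ and $[m_n,c'(n)]$ converge on compacta to the rays $[\sigma(0),\zeta)$ and $[\sigma(0),\eta)$, which therefore coincide with the two halves of $\sigma$. Hence $\sigma(-\infty)=\zeta$ and $\sigma(\infty)=\eta$, so $l:=\sigma$ is the desired geodesic line and $\zeta$ is a visibility point.
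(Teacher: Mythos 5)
Your proof is correct, but it takes a genuinely different route from the paper's. The paper (Corollary \ref{cor: The sublinear boundary is a visibility space}, following $((4)\Rightarrow(1))$ of Proposition 9.35 in Chapter \RNum{2}.9 of \cite{BH1}) never takes limits of chords: it considers the nested horoballs $B(a)=b_c^{-1}((-\infty,-a])$, uses boundedness of $B(0)\cap B'$ (Lemma \ref{lem: bounded diameter}) to show that $m=\sup\{a\ge 0 \mid B(a)\cap B'\neq\emptyset\}$ is finite and attained at some $y$, and then defines $l$ directly as the union of the two rays $d,d'$ issuing from $y$ asymptotic to $\zeta$ and $\eta$, verifying that this union is geodesic via the horofunction computation $d(d(t),d'(t))\ge |b_c(d(t))-b_c(z)|+|b_{c'}(z)-b_{c'}(d'(t))|\ge 2t$ for an intermediate-value point $z\in[d(t),d'(t)]$ with $b_c(z)=-m$. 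You instead extract $l$ as an Arzel\`a--Ascoli limit of the segments $[c(n),c'(n)]$; your key step --- that each such segment meets the fixed bounded set $\{h\le 0\}\cap\{h'\le 0\}$, via convexity, $1$-Lipschitzness, and the inequality $t'_n\le \ell_n-n\le n\le t_n$ --- is a clean substitute for the paper's extremal point $y$, and it consumes exactly the same two inputs, parts (2) and (3) of Theorem \ref{main theorem}. Your route is the more generic compactness argument and avoids the ``deepest penetration'' supremum, but it pays at the very end: identifying the ideal endpoints of the limit line invokes the continuity of geodesics in a proper CAT(0) space with respect to their (possibly ideal) endpoints, a standard fact (provable from convexity of the metric plus Arzel\`a--Ascoli, and for the $\zeta$-end one could alternatively note $h(\sigma(s))\le -|s|/2$ for $s\le 0$ and apply Theorem \ref{main theorem}(1)) that is nonetheless not proved in the paper and should be cited or argued explicitly; the paper's construction gets the endpoints for free, since $d$ and $d'$ are chosen asymptotic to $c$ and $c'$ from the outset.
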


 The above Corollary \ref{introcor:visibility} appears in the article \cite{QingZalloum1} by Qing and the author, however, our argument in that paper is incomplete as it relies on Corollary 9.9 of Chapter \RNum{2}.9  in \cite{BH1} which doesn't hold in the full generality of proper CAT(0) spaces. More precisely, Corollary 9.9 of Chapter \RNum{2}.9  in \cite{BH1} assumes that the angle between any two points (see Definition 9.4 of Chapter \RNum{2}.9  in \cite{BH1}) in the visual boundary is realized as an angle between two geodesic rays emanating from a certain point $x_0$ in the space, which may not always be the case. Therefore, the visibility and the existence of a rank-one isometry results will both be removed in the subsequent updated arXiv version of \cite{QingZalloum1}. The mistake above was pointed out to the author by Merlin Incerti-Medici.

Visibility of the sublinear boundary has a wide range of applications. For example, it's used in a current work-in-progress of Merlin Incerti-Medici and the author to show that sublinearly contracting boundaries of certain CAT(0) cube complexes (the ones with a factor system) continuously inject in a the Gromov's boundary of a $\delta$-hyperbolic space. An immediate consequence of the Corollary \ref{introcor:visibility} is the following.

  \begin{introcor}\label{introcor:rank1} Let $G$ act geometrically on a proper CAT(0) space $X$. If $X$ contains a sublinearly contracting geodesic ray, then $G$ contains a rank one isometry.
 
\end{introcor}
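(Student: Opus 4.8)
The plan is to use Corollary~\ref{introcor:visibility} to produce a \emph{rank one geodesic line} in $X$ and then to appeal to the now-standard fact that a proper CAT(0) space carrying a geometric group action and containing a rank one geodesic line must have that group contain a rank one isometry (this goes back to Ballmann in the Hadamard manifold case and to Ballmann--Buyalo in the general setting; the hypothesis needed there beyond properness, the so-called duality condition, is automatic for cocompact actions). Recall that a bi-infinite geodesic is \emph{rank one} if it bounds no flat half-plane, and that an axial isometry is rank one if one (hence every) of its axes is; so it suffices to exhibit a rank one geodesic line.

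First I would fix a sublinearly contracting geodesic ray $c$ in $X$, say $\kappa$-contracting, with $\zeta = c(\infty)\in\partial X$. Since $X$ is unbounded (it contains $c$) and $G$ acts cocompactly, $X$ contains a geodesic line; in particular $\partial X$ has at least two points, so I may pick $\eta\in\partial X$ with $\eta\neq\zeta$. Applying Corollary~\ref{introcor:visibility} to the pair $\zeta,\eta$ yields a geodesic line $l$ with $\{l(-\infty),l(+\infty)\}=\{\zeta,\eta\}$; after reparametrizing I may assume $l(+\infty)=\zeta$, and I set $r = l|_{[0,\infty)}$.

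Next I would verify that $l$ is rank one. Since $r(\infty)=\zeta=c(\infty)$, the rays $r$ and $c$ are asymptotic, hence lie at finite Hausdorff distance from one another in the proper CAT(0) space $X$ (see \cite{BH1}). Because the $\kappa$-contracting property survives a bounded perturbation of the ray (\cite{QR19}), $r$ is itself $\kappa'$-contracting for some sublinear $\kappa'$. On the other hand, if $l$ bounded a flat half-plane $F$, then $r$ would be a boundary ray of a flat quarter-plane $F'\subseteq F$: centering a ball of radius $\rho-1$ at the point of $F'$ lying at Euclidean height $2\rho$ directly over $r(\rho)$ gives a ball that is disjoint from $r$, whose nearest-point projection onto $r$ has diameter at least $2\rho-2$, while its centre sits at distance $\sqrt{5}\,\rho$ from $r(0)$; letting $\rho\to\infty$ contradicts $\kappa'$-contraction of $r$. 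Hence $l$ bounds no flat half-plane, i.e.\ $l$ is a rank one geodesic line, and the cited theorem produces a rank one axial element of $G$.

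The only substantive points are the two CAT(0) inputs used in the third paragraph --- that asymptotic rays in a proper CAT(0) space lie within finite Hausdorff distance, and that sublinear contraction is inherited by Hausdorff-close rays --- together with checking that the hypotheses of the Ballmann--Buyalo theorem (properness, cocompactness, and hence the duality condition) are in force; the flat-half-plane computation and the remaining bookkeeping are routine. I expect the citation of the rank one existence theorem to be the step most in need of care, since one must invoke it in exactly the generality of proper complete CAT(0) spaces rather than Hadamard manifolds.
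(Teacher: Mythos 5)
Your proof is correct, but it routes through a different formulation of the Ballmann--Buyalo input than the paper does. The paper's argument is immediate: Corollary \ref{introcor:visibility} shows that $\zeta=c(\infty)$ is a visibility point of $\partial X$, and the paper then quotes Ballmann--Buyalo (Proposition 1.10 of \cite{Ballmann2008}) in the form \emph{``geometric action plus a visibility point in $\partial X$ implies a rank one isometry''} --- no rank one geodesic is ever exhibited explicitly. You instead use Corollary \ref{introcor:visibility} only to produce a geodesic line $l$ ending at $\zeta$, then do genuine extra work: you transfer the $\kappa$-contracting property to the asymptotic ray $r=l|_{[0,\infty)}$ (this does hold, by the basepoint/asymptoty invariance in \cite{QR19}, since asymptotic rays in a CAT(0) space are at bounded Hausdorff distance), and you rule out a flat half-plane along $l$ by the ball-projection computation in the quarter-plane, which is sound because a flat half-plane is convex and isometrically embedded, so the projection of the ball onto $r$ really does contain a segment of length about $2\rho$ while the ball's centre is only $\sqrt5\,\rho$ from the basepoint, contradicting sublinearity of $\kappa'$. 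You then invoke the rank-one-geodesic form of the same Ballmann--Buyalo circle of results (with the duality condition, which is indeed automatic for cocompact actions on proper spaces). What your version buys is an explicit rank one geodesic line and independence from the precise ``visibility point'' phrasing of Proposition 1.10; what it costs is the extra verification steps (inheritance of contraction under asymptoty, the flat-strip computation, and the existence of a second boundary point, which you correctly justify via unboundedness and cocompactness). Both arguments are valid; the paper's is shorter because it lets the cited proposition absorb all of the geometry you carry out by hand.
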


\subsection*{Outline of the paper} Section 2 contains some background material on visual boundaries and horofunctions of CAT(0) spaces, it also contains some preliminaries on sublinearly contracting geodesic rays. Section 3 contains the proof of Theorem \ref{main theorem}, Corollaries \ref{introcor:visibility} and \ref{introcor:rank1}.

\subsection*{Acknowledgement} The author is also thankful to Merlin Incerti-Medici for pointing out the mistake discussed in the introduction and to Yulan Qing for multiple fruitful discussions.

\section{Preliminaries}

\subsection{CAT(0) spaces and their boundaries}
A proper geodesic metric space $(X, d_X)$ is CAT(0) if geodesic triangles in $X$ are at 
least as thin as triangles in Euclidean space with the same side lengths. To be precise, for any 
given geodesic triangle $\triangle pqr$, consider the unique triangle 
$\triangle \overline p \overline q \overline r$ in the Euclidean plane with the same side 
lengths. For any pair of points $x, y$ on edges $[p,q]$ and $[p, r]$ of the 
triangle $\triangle pqr$, if we choose points $\overline x$ and $\overline y$  on 
edges $[\overline p, \overline q]$ and $[\overline p, \overline r]$ of 
the triangle $\triangle \overline p \overline q \overline r$ so that 
$d_X(p,x) = d_\E(\overline p, \overline x)$ and 
$d_X(p,y) = d_\E(\overline p, \overline y)$ then,
\[ 
d_{X} (x, y) \leq d_{\mathbb{R}^{2}}(\overline x, \overline y).
\] 

For the remainder of the paper, we assume $X$ is a proper CAT(0) space. A metric space $X$ is {\it proper} if closed metric balls are compact. We need the following properties of a CAT(0) space:
\begin{lemma}
 A proper CAT(0) space $X$ has the following properties:
\begin{enumerate}
\item It is uniquely geodesic, that is, for any two points $x, y$ in $X$, 
there exists exactly one geodesic connecting them. Furthermore, $X$ is contractible 
via geodesic retraction to a base point in the space. 
\item The nearest point projection from a point $x$ to a geodesic line $b$ 
is a unique point denoted $x_b$. In fact, the closest point projection map
\[
\pi_b \from X \to b
\]
is Lipschitz. 
\item For any $x \in X$, the distance function $d(x,-)$ is convex. In other words, for any given any geodesic $[x_0,x_1]$ and  $t \in [0,1]$, if $x_t $  satisfies $d(x_0,x_t)=td(x_0,x_1)$ then we must have $d(x,x_t) \leq (1-t)d(x,x_0)+td(x,x_1)$.
\end{enumerate}
\end{lemma}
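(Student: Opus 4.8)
The plan is to establish the three items in turn; each is a standard consequence of the CAT(0) comparison inequality (all three appear in Chapter II.1--II.2 of \cite{BH1}), but I will indicate the arguments. For (1), to prove uniqueness of geodesics I would take two geodesics $\gamma,\gamma'$ from $x$ to $y$ and apply the CAT(0) inequality to the degenerate geodesic triangle with vertices $x,y,y$ whose three sides are $\gamma$, the constant path at $y$, and the reverse of $\gamma'$. Its comparison triangle in the Euclidean plane has side lengths $d(x,y),0,d(x,y)$, hence degenerates to a segment, so for the points $\gamma(s)$ and $\gamma'(s)$ at equal arclength $s$ from $x$ the two comparison points coincide, and therefore $d(\gamma(s),\gamma'(s))=0$. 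For contractibility I would fix a basepoint $x_0$ and define $H\from X\times[0,1]\to X$ by letting $H(x,t)$ be the point dividing the geodesic $[x_0,x]$ in ratio $t:(1-t)$; then $H(\cdot,1)=\id$ and $H(\cdot,0)\equiv x_0$, and continuity of $H$ follows once one knows that geodesics vary continuously with their endpoints, which is itself a consequence of the convexity inequality in (3).

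For (2), I would first note that a geodesic line $b$ is a complete (as $X$ is proper, hence complete) convex subset of $X$. Existence of a nearest point uses properness: for $x\in X$ and any $R>d(x,b)$, the set $b\cap \overline{B}(x,R)$ is nonempty and compact, so $d(x,\cdot)$ attains its infimum over $b$ there. Uniqueness follows from the $(\mathrm{CN})$ inequality: if $p_1\neq p_2$ both realize $\rho:=d(x,b)$, then the midpoint $m$ of $[p_1,p_2]$ lies in $b$ by convexity and satisfies $d(x,m)^2\leq \tfrac12 d(x,p_1)^2+\tfrac12 d(x,p_2)^2-\tfrac14 d(p_1,p_2)^2<\rho^2$, a contradiction. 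For the Lipschitz bound on $\pi_b$ — in fact $\pi_b$ is $1$-Lipschitz — I would argue that the Alexandrov angle of $\triangle(x,x_b,y_b)$ at $x_b$ and that of $\triangle(y,y_b,x_b)$ at $y_b$ are each at least $\pi/2$, since otherwise one could decrease $d(x,x_b)$ or $d(y,y_b)$ by moving a little along $[x_b,y_b]$, contradicting the definition of the projection; a Euclidean comparison argument then yields $d(x_b,y_b)\leq d(x,y)$.

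For (3), given $x$ and a geodesic $[x_0,x_1]$ with $x_t$ the point at parameter $t$, I would pass to the comparison triangle $\triangle(\overline x,\overline x_0,\overline x_1)$ in $\mathbb{E}^2$ and let $\overline x_t$ be the comparison point of $x_t$ on $[\overline x_0,\overline x_1]$. The CAT(0) inequality gives $d(x,x_t)\leq d_{\mathbb{E}^2}(\overline x,\overline x_t)$, while convexity of the Euclidean norm gives $d_{\mathbb{E}^2}(\overline x,\overline x_t)\leq (1-t)\,d_{\mathbb{E}^2}(\overline x,\overline x_0)+t\,d_{\mathbb{E}^2}(\overline x,\overline x_1)$; since the comparison triangle has the same side lengths as $\triangle(x,x_0,x_1)$, the right-hand side equals $(1-t)d(x,x_0)+t\,d(x,x_1)$, as desired.

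None of these steps is deep; the only places that require a little care are the continuity of the geodesic-retraction homotopy in (1) and the $1$-Lipschitz estimate in (2), and both ultimately rest on the convexity inequality of (3). Accordingly I would prove (3) first and then feed it back into (1) and (2) — or, since the statement is entirely classical, simply cite Propositions II.1.4 and II.2.4 of \cite{BH1} together with the discussion of convexity of the metric given there.
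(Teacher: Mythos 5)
Your proof is correct: the degenerate comparison-triangle argument for uniqueness, properness plus the (CN) inequality for the projection, the angle/comparison argument for the $1$-Lipschitz bound, and the Euclidean convexity transfer for part (3) are all the standard arguments (Propositions II.1.4, II.2.2 and II.2.4 of \cite{BH1}). The paper itself states this lemma as background without proof, so your write-up simply supplies the classical proofs it implicitly cites; there is nothing to compare beyond that.
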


Now we give two definitions of the \emph{visual boundary}
of a CAT(0) space, both of which are needed in this paper.


 \begin{definition}[space of geodesic rays] \label{def: visual topology}
As a set, the \emph{visual boundary} of $X$, denoted by $\partial X$ is defined to be the collection of equivalence classes of all infinite geodesic rays. Let $b$ and $c$ be two infinite geodesic rays, not necessarily starting at the same point. We define an equivalence relation as follows:  $b$ and $c$ are in the same equivalence class,  if and only if there exists some $\nn \geq 0$ such that $d(b(t), c(t)) \leq \nn$ for all $t \in [0 ,\infty).$ We denote the equivalence class of a geodesic ray $b$ by $b(\infty).$
 \end{definition}

 Notice that by Proposition 8.2 in the CAT(0) boundary section of \cite{BH1}, for each $b$ representing an element of $\partial X$, and for each $x' \in X$, there is a unique geodesic ray $b'$ starting at $x'$ with $b(\infty)=b'(\infty).$ Now we describe the topology of the visual boundary: Fix a base point $\go$ and let $b$ be a geodesic ray starting at $\go$. A neighborhood basis for $b(\infty)$ is given by sets of the form: 
\[U(b(\infty),r, \epsilon):=\{ c(\infty) \in \partial X| \, c(0)=\go\,\,\text{and }\,d(b(t), c(t))<\epsilon \, \, \text{for all }\, t < r \}.\]

 In other words, two geodesic rays are close together in this topology if they have
representatives starting at the same point which stay close (are at most $\epsilon$ apart) for a long
time (at least $r$). Notice that the above definition of the topology on $\partial X$ made a reference to a base point $\go$. Nonetheless, Proposition 8.8 in the CAT(0) boundaries section of \cite{BH1} shows that the topology of the visual boundary is a base point invariant. It's also worth mentioning that when $X$ is proper, then the space $\overline{X}=X \cup \partial X$ is compact.

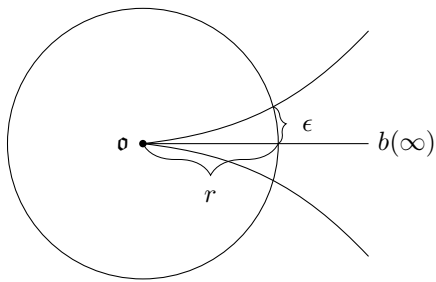
\begin{figure}[h]
\begin{center}
\begin{tikzpicture}[scale=0.6]

\node (x) [circle,fill,inner sep=1pt,label=180:$\go$] at (0,0) {};
\draw [name path=circle] (0,0) circle (3);

\draw [name path=line,thin] (0,0) to [bend right=20] (5,2.5);
\draw [thin] (0,0) to [bend left=20] (5,-2.5);
\draw (0,0) to (5,0) node [right] {$b(\infty)$};

\draw [very thin,
	decorate,
	decoration={brace,mirror,amplitude=12pt}] 
	(0,0) -- (3,0) node [midway,yshift=-20pt] {$r$};

\draw [very thin,
	name intersections={of=line and circle},
	decorate,
	decoration={brace,amplitude=4pt}] 
	(intersection-1) -- (3,0) node [midway,xshift=12pt] {$\epsilon$};

\end{tikzpicture}
\end{center}
\caption{A basis for open sets}
\label{}
\end{figure}

\begin{definition}(Visibility points) For a proper CAT(0) space $X$, a point $\zeta \in \partial X$ is said to be a \emph{visibility point} if for any $\eta \neq \zeta$, there exists a geodesic line $l:(-\infty, \infty) \rightarrow X$ such that $l(-\infty)=\zeta$ and $l(\infty)=\eta.$

\end{definition}

\subsection{Horofunctions, horospheres and horoballs}
\begin{definition}[space of horofunctions]
Let $X$ be any metric space, and let $C(X)$ be the collection of all continuous maps $f:X \rightarrow \mathbb{R}.$ Let $C_{\star}(X)$ denote the quotient of $C(X)$ which is defined by identifying functions which differ by a constant. That is, $C_{\star}(X)= C(X)/ \sim $, where $f \sim g$ if and only if $f(x)=g(x)+K$ for some constant $K \in \mathbb{R}$. For an element $f \in C(X)$, let $\overline{f}$ denote its image in the quotient $C_{\star}(X).$ There is a natural embedding $i:X \rightarrow C_{\star}(X)$ by \[i(x)=\overline{d(x,-)}.\] Define $\hat{X}$ to be the closure of $i(X)$ in $C_{\star}(X)$.

Let $X$ be any metric space. A \emph{horofunction} is a map $h:X \rightarrow \mathbb{R}$ such that $\overline{h} \in \hat{X}-i(X)$. Notice that horofunctions are 1-Lipschitz as they are limits of distance functions. Let $X$ be any metric space and let $c:[0,\infty) \rightarrow X$ be a geodesic ray. Define the \emph{Busemann function} associated to $c$ by: 
\[ b_c(x)=\lim_{t\rightarrow \infty} [d(x,c(t))-t].\]

The above limit exists by the triangle inequality. Notice that Busemann functions are horofunctions. On the other hand, the space of horofunctions is often larger (we will see however that for CAT(0) spaces Busemann functions and horofunctions coincide up to addition by a constant). Let $B$ denote the collection of all Busemann functions over $X$. Define $\overline{B}$ to be the quotient of $B$ denote the quotient of $B$ taken with respect to the equivalence relation that identifies functions which differ by a constant. 

\begin{theorem} [Theorem 8.13 and Proposition 8.20  \cite{BH1}] \label{thm: horofunctions are Busemann}
Let $X$ be a proper CAT(0) space and let $B$ be the space of all Busemann functions over $X$. Fix a base point $\go \in X$ and let $\partial X$ be visual boundary whose elements are uniquely represented by geodesic rays starting at $\go.$ We have the following:

\begin{enumerate}
    \item The class of horofunctions and Busemann functions coincide up to addition by a constant. In other words, every horofunction $h: X \rightarrow \mathbb{R}$ is of the form $b_c+ \nn$ of some Busemann function $b_c$ and a constant $\nn.$
    
    \item The natural map $f:\partial X \rightarrow \overline{B} $ given by $c(\infty) \mapsto \overline{b_c}$ is a homeomorphism. In particular, any two Busemann functions associated to the same boundary point $\zeta$ are the same up to addition by a constant.
\end{enumerate}
\end{theorem}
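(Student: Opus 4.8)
The plan is to reduce both statements to a single convergence lemma and then obtain the theorem by soft arguments using the compactness of $\overline{X}$. The lemma I would isolate is: if $(y_n)$ is a sequence in $\overline{X}=X\cup\partial X$ converging in the cone topology to a boundary point $\xi$, and $c$ is the geodesic ray based at $\go$ with $c(\infty)=\xi$, then $d(\cdot,y_n)-d(\go,y_n)\to b_c$ uniformly on bounded subsets of $X$ (when $y_n\in\partial X$ this is read as $b_{c_{y_n}}\to b_c$). Equivalently, the natural map $\iota\colon\overline{X}\to\hat{X}$, $x\mapsto\overline{d(x,\cdot)}$ and $\xi\mapsto\overline{b_{c_\xi}}$, is continuous; continuity at interior points is immediate. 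I expect this lemma to be the main obstacle.

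To prove the lemma I would argue by CAT(0) comparison, uniformly over a fixed ball $\overline{B(\go,R)}$. Writing $r_n=d(\go,y_n)$ and $\gamma_n=[\go,y_n]$: for $x$ in the ball and $t$ fixed one has $\gamma_n(t)\to c(t)$ in $X$, so $d(x,y_n)-r_n\le d(x,\gamma_n(t))-t\to d(x,c(t))-t$, and letting $t\to\infty$ gives $\limsup_n\bigl(d(x,y_n)-r_n\bigr)\le b_c(x)$. For the matching lower bound, the Euclidean law of cosines in the comparison triangle for $(\go,x,y_n)$ gives, with $\rho=d(\go,x)\le R$, $d(x,y_n)=r_n-\rho\cos\overline\angle_\go(x,y_n)+O(R^2/r_n)$ with the error uniform in $x$; monotonicity of the comparison angle $\overline\angle_\go(x,c(t))$ in $t$ (a standard consequence of the CAT(0) inequality) together with the continuity $\overline\angle_\go(x,\gamma_n(t))\to\overline\angle_\go(x,c(t))$ yields $\liminf_n\overline\angle_\go(x,y_n)\ge\overline\angle_\go(x,\xi):=\lim_t\overline\angle_\go(x,c(t))$, and the same law of cosines for $(\go,x,c(t))$ identifies $b_c(x)=-\rho\cos\overline\angle_\go(x,\xi)$; combining, $\liminf_n\bigl(d(x,y_n)-r_n\bigr)\ge b_c(x)$. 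A Dini-type (compactness plus uniform-continuity) argument on $\overline{B(\go,R)}$ upgrades both one-sided bounds to uniform convergence. The genuinely delicate point is getting the angle estimate uniform over the ball.

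Granting the lemma, part (1) is immediate: a horofunction $h$ is by definition a limit in $C_\star(X)$ of distance functions $\overline{d(x_n,\cdot)}$ with $\overline{h}\notin i(X)$, which forces $(x_n)$ to eventually leave every compact set; since $X$ is proper $\overline{X}$ is compact, so after passing to a subsequence $x_n\to\xi\in\partial X$, and the lemma gives $\overline{h}=\overline{b_{c_\xi}}$, i.e.\ $h=b_{c_\xi}+\nn$ with $\nn=h(\go)$. For part (2): the lemma also shows $\overline{b_c}$ depends only on $\zeta=c(\infty)$, since for any ray $c$ with $c(\infty)=\zeta$ the points $c(n)$ converge to $\zeta$ in the cone topology, whence $d(\cdot,c(n))-d(\go,c(n))\to b_{c_\zeta}$ while $d(\go,c(n))-n\to b_c(\go)$, and adding these (the left sides sum to $d(\cdot,c(n))-n\to b_c$) gives $b_c=b_{c_\zeta}+b_c(\go)$; so $f$ is well defined. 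Put $\overline{B}=\hat{X}\setminus i(X)$ with its subspace topology: every $\overline{b_c}$ lies there because Busemann functions are unbounded below while distance functions are not, and by part (1) every element of $\hat{X}\setminus i(X)$ is some $\overline{b_c}$, so $\iota$ restricts to the continuous surjection $f\colon\partial X\to\overline{B}$. It is injective because, for rays $c,c'$ based at $\go$, $b_c=b_{c'}$ forces $c(t)=c'(t)$ for all $t$: $c(t)$ is the \emph{unique} point of $\overline{B(\go,t)}$ at which the $1$-Lipschitz convex function $b_c$ attains the value $-t$, since convexity of $b_c$ and the fact that the midpoint of two distinct points at distance $t$ from $\go$ lies strictly inside $\overline{B(\go,t)}$ exclude a second such point. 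Thus $f$ is a continuous bijection from the compact space $\partial X$ onto the Hausdorff space $\overline{B}$, hence a homeomorphism.
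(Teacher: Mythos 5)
This statement is not proved in the paper at all --- it is imported verbatim from Bridson--Haefliger (Theorem 8.13 and Proposition 8.20 of Chapter II.8), so there is no in-paper argument to compare yours against. Judged on its own merits, your reconstruction is essentially correct and follows the standard route: the two-sided estimate $\lim_n\bigl(d(x,y_n)-d(\go,y_n)\bigr)=b_c(x)=-\rho\cos\overline{\angle}_{\go}(x,\xi)$, obtained from the comparison law of cosines together with monotonicity of comparison angles along rays, is exactly how one identifies $\overline{X}$ with $\hat{X}$; your derivation of part (1) from compactness of $\overline{X}$ is right; and your injectivity argument (that $c(t)$ is the unique point of $\overline{B(\go,t)}$ where $b_c$ attains $-t$, by convexity of $b_c$ plus strict convexity of metric balls in CAT(0)) is clean and correct. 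Two small points. First, ``Dini-type'' is not the right invocation for upgrading pointwise to uniform convergence on the ball: what you actually use is that the functions $d(\cdot,y_n)-d(\go,y_n)$ are uniformly $1$-Lipschitz, so pointwise convergence on a compact set is automatically uniform, with no monotonicity needed. Second, and more substantively, your final step (continuous bijection from a compact space onto a Hausdorff space) rests on continuity of $f$ on $\partial X$, which is the case $y_n=\xi_n\in\partial X$ of your lemma; you assert this case only parenthetically while your sketch treats $y_n\in X$. It does go through by the same pair of estimates --- the upper bound from $b_{c_{\xi_n}}(x)\le d(x,c_{\xi_n}(t))-t$ with $c_{\xi_n}(t)\to c_{\xi}(t)$, and the lower bound from $\liminf_n\overline{\angle}_{\go}(x,\xi_n)\ge\overline{\angle}_{\go}(x,\xi)$ --- but since the homeomorphism claim is exactly what hangs on it, that case should be written out rather than asserted.
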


In light of the above theorem, every geodesic ray uniquely determines a class of horofunctions $[b_c]$ and two horofunctions are in the same class if and only if they differ by a constant. Hence, we may speak of \emph{a horofunction associated to a geodesic ray $c$} by which we mean a map of the form $b_c+\nn$ for some constant $\nn.$

\end{definition}

\begin{definition}[Horospheres, convergence of horospheres and horoballs]
Let $\zeta$ be a point in the visual boundary $\partial X$ and let $h$ be some horofunction associated to $c$ with $c(\infty)=\zeta.$ \begin{itemize}
    \item An \emph{$h$-horosphere} is a set of the form $H_k=h^{-1}(-k)$ for some $k \in \mathbb{R}.$ 
    
    \item A sequence of $h$-horospheres $\{H_k\}_{k \in \mathbb{N}}$ is said to be \emph{convergent} if whenever $x_k \in H_k,$ we have $x_k \rightarrow \zeta.$
    
    \item An \emph{$h$-horoball} is a set of the form $B_{k}:=h^{-1}((-\infty,-k])$ for some $k \in \mathbb{R}.$
\end{itemize} 

\begin{remark} \label{rmk:horospheres}

Notice that for any $\nn \in \mathbb{R}$ both $h$ and $h+\nn$ have the same set of horospheres, therefore, it's more natural to think of horospheres as if they are assigned to classes of horofunctions (where two two horofunctions are the same if and only if they differ by a constant) as opposed being assigned to horofunctions.
\end{remark}
\end{definition}

\begin{lemma}[Proposition 8.22 of Chapter \RNum{2}.8  in \cite{BH1}]\label{lem: convexity of horofunctions} For a proper CAT(0) space $X$, every horofunction $h:X \rightarrow \mathbb{R}$ is a convex 1-Lipschitz map.

\end{lemma}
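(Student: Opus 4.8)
The plan is to reduce the statement to two elementary observations: on a proper CAT(0) space the distance functions are convex and $1$-Lipschitz, and both properties survive a pointwise passage to the limit, so a horofunction, being such a limit (up to additive constants), inherits them.

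First I would unwind the definition of $\hat X$. Since $\overline h \in \hat X - i(X)$, there is a sequence $x_n \in X$ with $\overline{d(x_n,-)} \to \overline h$ in $C_\star(X)$, and as $X$ is proper this convergence is uniform on bounded sets; choosing representatives, the maps $h_n(x) = d(x,x_n) - d(x_n,\go)$ converge to $h$ pointwise. Each $h_n$ is $1$-Lipschitz by the triangle inequality, so $h$ is $1$-Lipschitz — this was already remarked above and costs nothing. For convexity, fix a geodesic segment $[x_0,x_1]$ and $t \in [0,1]$, and let $x_t$ be the point with $d(x_0,x_t) = t\,d(x_0,x_1)$. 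By the convexity of $d(x_n,-)$ (listed among the properties of proper CAT(0) spaces at the start of this section) we have $d(x_n,x_t) \le (1-t)d(x_n,x_0) + t\,d(x_n,x_1)$; subtracting the constant $d(x_n,\go)$ from each of the three distances, the same inequality holds for $h_n$, i.e.\ $h_n(x_t) \le (1-t)h_n(x_0) + t\,h_n(x_1)$. Letting $n \to \infty$ gives $h(x_t) \le (1-t)h(x_0) + t\,h(x_1)$, which is exactly convexity of $h$.

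An alternative route, more in keeping with the surrounding discussion, is to invoke Theorem \ref{thm: horofunctions are Busemann} to write $h = b_c + \nn$ and argue directly on the Busemann function: for fixed $x$, the triangle inequality shows that $t \mapsto d(x,c(t)) - t$ is non-increasing and bounded below by $-d(x,c(0))$, so $b_c$ is a monotone pointwise limit of the convex, $1$-Lipschitz functions $x \mapsto d(x,c(t)) - t$, and convexity and the Lipschitz bound pass to the limit as before. Either way there is no genuine obstacle: all of the content sits in the CAT(0) convexity of the distance function, which we are allowed to assume, and the only point requiring a line of care is the routine fact that pointwise limits preserve convexity and $1$-Lipschitzness, both of which are immediate from the defining inequalities.
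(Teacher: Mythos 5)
Your proof is correct. The paper itself gives no argument here --- it simply cites Proposition 8.22 of Chapter II.8 in \cite{BH1} --- and your reasoning (horofunctions are pointwise limits of normalized distance functions $d(\cdot,x_n)-d(x_n,\go)$, each convex and $1$-Lipschitz by the CAT(0) convexity of the metric, and both properties survive pointwise limits; the additive-constant ambiguity from working in $C_\star(X)$ affects neither property) is precisely the standard argument behind that citation, so nothing further is needed.
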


%
%

\subsection{Sublinearly contracting boundaries of CAT(0) spaces}
%

Let $\kappa \from [0, \infty) \to [1, \infty)$ be a sublinear function that is monotone increasing and concave. That is
\[
\lim_{t \to \infty} \frac{\kappa(t)}{t} = 0.
\]
The assumption that $\kappa$ is increasing and concave makes certain arguments
cleaner, otherwise they are not really needed. One can always replace any 
sub-linear function $\kappa$, with another sub-linear function $\overline \kappa$
so that \[\kappa(t) \leq \overline \kappa(t) \leq \sC \, \kappa(t)\] for some constant $\sC$ 
and $\overline \kappa$ is monotone increasing and concave. For example, define 
\[
\overline \kappa(t) = \sup \Big\{ \lambda \kappa(u) + (1-\lambda) \kappa(v) \ST 
\ 0 \leq \lambda \leq 1, \ u,v>0, \ \text{and}\ \lambda u + (1-\lambda)v =t \Big\}.
\]

\begin{figure}[!h]
\begin{tikzpicture}[scale=0.8]
\draw[thin,->] (0,0) -- (8,0);

\node[left] at (0,0) {$\go$};

\draw[red, dashed] (5.32,2.3)  .. controls ++(-.3,-.4) and ++(0,1) .. (4.5,0) ;

\draw[red, dashed] (2.68,2.3)  .. controls ++(.3,-.4) and ++(0,1) .. (3.5,0) ;

\draw[very thick, red] (3.5,0) -- (4.5,0);

\node[below] at (4,0) {$ \leq \nn \kappa(||x||)$};

\node[below] at (8,0) {$b$};

\draw[thick,red] (4,3) circle (1.5cm);

\node[ left] at (4,3) {$x$};

\draw[thin,dashed, ->] (0,0) -- (4,3);

\node[ left] at (2,2) {$||x||$};

\draw[thick,fill=black] (4,3) circle (0.05cm);

\end{tikzpicture}
\end{figure}

\begin{definition}[$\kappa$--neighborhood]  \label{Def:Neighborhood} 
For a geodesic ray $b$ and a constant $\nn$, define the \emph{ $(\kappa, \nn)$--neighbourhood} 
of $b$ to be 
\[
\calN_\kappa(b, \nn) = \Big\{ x \in X \ST 
  d_X(x, b) \leq  \nn \cdot \kappa(x)  \Big\}.
\]
 
\end{definition}

\begin{figure}
\begin{tikzpicture}[scale=0.8]
\draw[thin,->] (0,0) -- (8,0);

\node[left] at (0,0) {$\go$};

\node[below] at (3,0) {$b$};

\node[below] at (6,1.8) {$\mathcal{N}_\kappa(b, \nn)$};

\draw[dashed, black] (0,0)  .. controls ++(0,1.5) and ++(-3,0) .. (7,2);

\draw[dashed, black] (0,0) .. controls ++(0,-1.5) and ++(-3,0) .. (7,-2);

\end{tikzpicture}
\end{figure}

\begin{definition}[$\kappa$--contracting] \label{Def:Contracting}
Fix $\go \in X$, for $x \in X$, define $\Norm{x} = d_X(\go, x)$. 
For a geodesic ray $b$ in $X$ with $b(0)=\go$, we say $b$ is \emph{$\kappa$--contracting} if there 
is a constant $\nn$ so that, for every ball $B$ centered at $x$,
\[
B \cap b= \emptyset \quad \Longrightarrow \quad diam (\pi_b(B)) \leq \nn \cdot \kappa(\Norm x).
\]
In fact, to simplify notation, we often drop $\Norm{\cdot}$. That is, for $x \in X$, we define
\[
\kappa(x) := \kappa(\Norm{x}). 
\] We say that $b$ is \emph{sublinearly contracting} if it's $\kappa$-contracting for some sublinear function $\kappa$.
\end{definition}

It is worth pointing out that if a point $\eta$ in the visual boundary has a representative which is a $\kappa$-contracting geodesic ray, then every other geodesic ray representing $\eta$ is also $\kappa$-contracting. In other words, we have the following.

\begin{lemma}
Let $b$ be a $\kappa$-contracting geodesic ray and let $x \in X.$ If $c$ is the unique geodesic ray emanating from $x$ with $c(\infty)=b(\infty),$ then $c$ is also $\kappa$-contracting.
\end{lemma}

\begin{proof}
The proof of this lemma is easy and will be left as an exercise for the reader. The idea is that since $b$ and $c$ are at a finite Hausdorff distance, the are coarsely ``the same". Hence, balls in $X$ project to large sets in $b$ if and only if they project to large sets in $c.$
\end{proof}

The following lemma will be used repeatedly throughout this paper, the conclusion is obvious but we provide a proof for the convenience of the reader. 

\begin{lemma} \label{lem: CAT(0) distance can't be sublinear}
Let $\kappa$ be a sublinear function and let $\nn \geq 0 $. If $f: [0, \infty) \rightarrow [0, \infty)$ is a convex function such that $f(0)=0$ and $f(t_i) \leq \nn \kappa(t_i)$ for an unbounded sequence $t_i \in [0, \infty)$ then $f=0.$
\end{lemma}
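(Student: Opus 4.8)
We want to show: if $f:[0,\infty)\to[0,\infty)$ is convex, $f(0)=0$, and $f(t_i)\leq \nn\kappa(t_i)$ for an unbounded sequence $t_i$, then $f\equiv 0$.

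Key idea: convexity forces $f(t)/t$ to be monotone increasing (this is a standard fact for convex functions vanishing at 0). But sublinearity of $\kappa$ forces $\kappa(t_i)/t_i \to 0$. So along the sequence $t_i$, $f(t_i)/t_i \leq \nn \kappa(t_i)/t_i \to 0$.

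Proof: Since $f$ is convex with $f(0)=0$, for $0 < s < t$ write $s = (1 - s/t)\cdot 0 + (s/t)\cdot t$, so convexity gives $f(s) \le (s/t) f(t)$, i.e. $f(s)/s \le f(t)/t$. Thus $g(t) := f(t)/t$ is nondecreasing on $(0,\infty)$. For any fixed $t > 0$, pick $t_i > t$ (possible since $t_i$ is unbounded); then $f(t)/t \le f(t_i)/t_i \le \nn\,\kappa(t_i)/t_i$. Letting $i \to \infty$ and using $\lim_{u\to\infty}\kappa(u)/u = 0$, we get $f(t)/t \le 0$, hence $f(t) = 0$. Since $t$ was arbitrary and $f(0) = 0$, we conclude $f \equiv 0$.

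The statement genuinely captures the phenomenon that in a CAT(0) space, if $c$ is a geodesic (so distances to it are convex, vanishing where the geodesic hits) and it comes sublinearly close to something along a sequence, it must actually meet it; but for the lemma in isolation the only "obstacle" is recalling the elementary convexity-slope fact, which is immediate once one writes $s$ as the correct convex combination of $0$ and $t$. (The hypotheses that $\kappa$ is increasing and concave are not needed here.)
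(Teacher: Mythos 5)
Your proof is correct and takes essentially the same route as the paper: both rest on the inequality $f(r)\leq (r/r')f(r')$ obtained by writing $r$ as a convex combination of $0$ and a large $r'=t_i$ from the sequence, and then letting $\kappa(r')/r'\to 0$. You phrase this as monotonicity of the slope $f(t)/t$ and pass to the limit, while the paper runs an explicit $\epsilon$ argument, but the underlying computation is identical.
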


\begin{proof}
In order to show this, it suffices to show that for any $\epsilon>0$, we have $f(r)< \epsilon$ for any $r>0.$ Let $\epsilon >0$ be given, fix a random $r>0$. Choose $r'$ large enough so that $\frac{\nn r\kappa(r')}{r'}< \epsilon$, and $f(r') \leq \nn \kappa(r').$ This is possible since $\kappa$ is sublinear and $f(t_i) \leq \nn \kappa(t_i)$ for an unbounded sequence $t_i$. Let $u:[0,1] \rightarrow [0, r']$ parameterizing $[0, r']$ proportional to its arc length. That is, $u(s)=sr'$. Now, since $f$ is a convex function, we have $f(u(s)) \leq (1-s)f(u(0))+sf(u(1))$, and hence

\begin{align*}
f(u(s)) &\leq (1-s)f(u(0))+sf(u(1))\\
                   &=(1-s)f(0)+sf(r') \\
                                  &=0+sf(r') \\
                                  &\leq s\nn \kappa(r').\\
\end{align*}

In particular, taking $s=\frac{r}{r'} \leq 1,$ we get $f(r)=f(u(\frac{r}{r'})) \leq \frac{r}{r'}\nn \kappa(r')<\epsilon.$
\end{proof}

We show that when $X$ is CAT(0), there is a unique geodesic ray in any $(\kappa,\nn)$--neighborhood of a geodesic ray. This is proven in \cite{QR19} in the special case where the geodesic ray is $\kappa$-contracting, here we observe that this is unnecessary.

\begin{lemma}(Uniqueness of geodesics in $\kappa$-neighborhood) \label{lemma: uniqueness of nbhds}
Let $b$ be a geodesic ray in a CAT(0) space $X$ (not necessarily $\kappa$-contracting) starting at $\go$ and let $\nn \geq 0$ be given. The geodesic ray $b$ is the unique geodesic ray starting at $\go$ and contained in the $(\kappa,\nn)$-neighborhood of $b$.
\end{lemma}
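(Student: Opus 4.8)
The plan is to argue by contradiction: suppose $b'$ is another geodesic ray with $b'(0) = \go$ and $b' \subseteq \calN_\kappa(b, \nn)$, but $b' \neq b$. Since $X$ is uniquely geodesic and both rays emanate from $\go$, distinctness means $d(b(t), b'(t)) > 0$ for all sufficiently large $t$; in fact I would first record that the function $t \mapsto d(b(t), b'(t))$ is convex (being the distance along a geodesic in a CAT(0) space applied to two geodesics from a common point) and vanishes at $t = 0$, hence is nondecreasing and eventually strictly positive. The goal is to compare this function to the sublinear bound coming from the $\kappa$-neighborhood hypothesis and invoke Lemma \ref{lem: CAT(0) distance can't be sublinear} to force $d(b(t),b'(t)) \equiv 0$, a contradiction.

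First I would set $f(t) := d(b'(t), b)$, the distance from the point $b'(t)$ to the ray $b$ (not to the point $b(t)$). The containment $b' \subseteq \calN_\kappa(b,\nn)$ says precisely that $f(t) \leq \nn \kappa(\Norm{b'(t)}) = \nn \kappa(t)$ for all $t$, since $\Norm{b'(t)} = d(\go, b'(t)) = t$ as $b'$ is a unit-speed geodesic from $\go$. Next I need that $f$ is convex with $f(0) = 0$: it vanishes at $0$ because $b'(0) = \go = b(0) \in b$, and convexity of $t \mapsto d(b'(t), b)$ follows because the nearest-point projection map $\pi_b$ is well-defined (Lemma, part (2)) and the distance to a convex set — here a geodesic ray, which is convex in a CAT(0) space — is a convex function, so composing with the geodesic $b'$ keeps it convex. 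Then Lemma \ref{lem: CAT(0) distance can't be sublinear} applied with the unbounded sequence $t_i = i$ yields $f \equiv 0$, i.e. $b'(t)$ lies on $b$ for every $t$. Since $b'$ is unit-speed from $\go = b(0)$ and $d(\go, b'(t)) = t$, and $b(t)$ is the unique point of the ray $b$ at distance $t$ from $\go$, we conclude $b'(t) = b(t)$ for all $t$, so $b' = b$.

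The one point requiring a little care — and the likely main obstacle — is the convexity of $t \mapsto d(b'(t), b)$. In a CAT(0) space the distance function to a complete convex subset is convex, and a geodesic ray is a complete convex subset; alternatively one can argue directly: for $s \in [0,1]$ and $t_s = (1-s)t_0 + s t_1$, compare $d(b'(t_s), \pi_b(b'(t_s)))$ with $d\big(b'(t_s), (1-s)\pi_b(b'(t_0)) + s\pi_b(b'(t_1))\big)$, where the latter point is the point on $b$ dividing the subsegment between the two projections in ratio $s$, and use the CAT(0) convexity of the distance function $d(-,-)$ between two geodesics (Lemma, part (3), applied twice). Everything else — that $\Norm{b'(t)} = t$, that $f(0) = 0$, and the final identification $b' = b$ — is immediate from unit-speed parametrization and unique geodesics. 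I would also note explicitly that no $\kappa$-contraction hypothesis on $b$ is used anywhere, which is the whole point of stating the lemma in this generality.
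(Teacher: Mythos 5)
Your proposal is correct and follows essentially the same route as the paper: define $f(t)=d(b'(t),b)$, observe it is convex (distance to the convex set $b$ composed with a geodesic), vanishes at $0$, and is bounded by $\nn\kappa(t)$, then apply Lemma \ref{lem: CAT(0) distance can't be sublinear} to conclude $f\equiv 0$ and hence $b'=b$. The only cosmetic difference is that you frame it as a contradiction and spend extra care justifying the convexity step, which the paper simply asserts.
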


\begin{proof}

Suppose that $\kappa$ and $\nn$ are given. Notice that if $c$ is a geodesic ray in some $(\kappa,\nn)$-neighborhood of $b$ with $c(0)=b(0),$ then for each $t$, we have $d(c(t), b) \leq \nn \kappa(t).$ On the other hand, since $b$ is a convex set, and since $c$ is a geodesic ray, the function $f(t)=d(c(t), b)$ is a convex function of $t$ satisfying $f(0)=0$ and $f(t) \leq \nn \kappa(t).$ Hence, by Lemma \ref{lem: CAT(0) distance can't be sublinear}, $f=0$ and $b=c.$

\end{proof}

\begin{notation} Let $X$ be a proper CAT(0) space. We will use the following notation:
\begin{itemize}
    \item For a geodesic ray $b$ and for $x \in X,$ we denote the unique projection of $x$ to $b$ by $x_b.$ 
    
    \item We denote the unique CAT(0) geodesic connecting two points $x,y$ by $[x,y].$
    \item We will use both $b$ and $im(b)$ to denote the image of the geodesic ray $b$ in $X$.
\end{itemize}
\end{notation}
The following lemma states that $\kappa$-contracting geodesics are \emph{$\kappa$-slim}.

\begin{lemma}[Lemma 3.5 \cite{MQZ20}]\label{lem: contracting implies slim} For any proper CAT(0) space, if $b$ is a $\kappa$-contracting geodesic ray, then there exists some $\nn \geq 0$ such that for any $x \in X, y \in b$, we have $d(x_b, [x,y]) \leq \nn\kappa(x_b)$, where $x_b$ is the unique projection of $x$ to $b.$

\end{lemma}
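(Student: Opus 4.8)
The plan is to deduce the $\kappa$-slimness of $\kappa$-contracting geodesics directly from the contraction property, following the standard argument for the contracting (Morse) case but tracking the sublinear gauge. Fix a $\kappa$-contracting geodesic ray $b$ with contraction constant $\nn_0$, a point $x \in X$, and a point $y \in b$; let $x_b$ be the nearest-point projection of $x$ to $b$, and set $r = d(x_b, [x,y])$. We want to bound $r$ by $\nn \kappa(x_b)$ for a uniform $\nn$. If $r = 0$ there is nothing to prove, so assume $r > 0$.

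**First I would** set up the basic estimate. Let $p$ be the point on $[x,y]$ realizing the distance $r = d(x_b, p)$, so the open ball $B(x_b, r)$ is disjoint from $[x,y]$; in particular it is disjoint from $\{x, y\}$. The key geometric input is that the segment $[x,y]$ escapes the ball $B(x_b, r)$ on both ends — it passes near $x_b$ (within $r$) but its endpoints $x$ and $y$ project to points of $b$ that are far apart relative to $r$, because $x_b$ lies "between" the projections of $x$ and of $y$. Concretely, $\pi_b(y) = y$ and $\pi_b(x) = x_b$; I'd want to argue that along $[x,y]$ the projection to $b$ moves from $x_b$ to $y$, and by considering points just outside $B(x_b,r)$ on $[x,y]$ and applying the $\kappa$-contraction property to balls centered along $[x,y]$ that are disjoint from $b$, one gets that the relevant portion of $[x,y]$ has length comparable to $d(x_b, y)$ while being trapped to project into a set of diameter $\leq \nn_0 \kappa(\|\cdot\|)$. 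The cleanest route: let $w$ be a point on $[x,y]$ at distance exactly $r$ from $x_b$ (so $w \in \partial B(x_b,r)$, assuming $r \le \max(d(x,p), d(y,p))$, else one endpoint is inside the closed ball and $r \le d(x_b,x)=0$ or $r \le d(x_b,y)$ which is even easier). Then the ball $B(w, \rho)$ for suitable $\rho$ slightly less than $r$ is disjoint from $b$...

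Actually, **the cleaner approach** I would take is to invoke the contraction property more directly via the following dichotomy, which is how such slimness lemmas are usually proven: either $[x,y]$ stays within bounded distance of $x_b$ throughout (handled by convexity), or it leaves the ball $B(x_b, r/2)$, in which case let $x', y'$ be the first exit points of the sub-segments $[x_b$-closest-point$, x]$ and $[\cdot, y]$ from $B(x_b, r/2)$; the balls $B(x', r/4)$ and $B(y', r/4)$ are disjoint from $b$ (since $d(x', b) \ge d(x',x_b)\cdot(\text{something})$... need care here), so their projections to $b$ each have diameter $\le \nn_0 \kappa(\max(\|x'\|,\|y'\|))$. Since $\|x'\|, \|y'\| \le \|x_b\| + r$ and one expects $r$ itself to ultimately be controlled, I would first obtain a bound $r \le \nn_1 \kappa(\|x_b\| + r)$; then using that $\kappa$ is sublinear (so $\kappa(\|x_b\|+r) = o(\|x_b\|+r)$) and the standard trick that $r \le \nn_1 \kappa(\|x_b\| + r)$ forces $r \le \nn_2 \kappa(\|x_b\|)$ for a new constant (concavity of $\kappa$ gives $\kappa(a+b) \le \kappa(a) + \kappa(b)$ and sublinearity absorbs the $r$-term), conclude the desired inequality $d(x_b,[x,y]) \le \nn \kappa(x_b)$.

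**The main obstacle** will be the bookkeeping in the self-referential estimate: the contraction property naturally yields a bound on $r$ in terms of $\kappa$ evaluated at norms of points that are themselves at distance up to $r$ from $x_b$, so I must carefully run the argument to close the loop and extract a bound purely in terms of $\kappa(x_b)$ with a uniform multiplicative constant. This is exactly the kind of estimate that Lemma~\ref{lem: CAT(0) distance can't be sublinear} and the concavity/sublinearity assumptions on $\kappa$ are designed to handle, so I would lean on those; the verification that the relevant balls along $[x,y]$ are genuinely disjoint from $b$ (not just from $[x,y]$) requires using that $x_b$ is the \emph{nearest} point projection, giving $d(z, b) \ge d(z, x_b)$ is false in general — rather one uses $d(z,b)\ge \tfrac12 d(x_b,b)$-type comparisons via the CAT(0) inequality — and this geometric step is where I'd be most careful. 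Since this is Lemma 3.5 of \cite{MQZ20}, I would in fact just cite that reference for the full details and present only this sketch.
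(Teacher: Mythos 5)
The paper does not actually prove this lemma: it is imported wholesale from \cite{MQZ20} (Lemma 3.5 there), so your closing move --- cite the reference and present only a sketch --- is exactly what the author does, and is acceptable as far as this paper is concerned.

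That said, the sketch you offer would not survive an attempt to complete it, for a concrete reason. You set $r = d(x_b,[x,y])$ and then propose the dichotomy ``either $[x,y]$ stays within bounded distance of $x_b$ throughout, or it leaves the ball $B(x_b,r/2)$,'' taking ``first exit points'' of $[x,y]$ from $B(x_b,r/2)$. But by the very definition of $r$, every point of $[x,y]$ is at distance at least $r$ from $x_b$, so the segment never meets $B(x_b,r/2)$ at all: there are no entry or exit points and the dichotomy is vacuous. Relatedly, any ball centered at $x_b$ is useless for the contraction hypothesis, since $x_b \in b$ means such a ball is never disjoint from $b$. The standard argument runs in the opposite direction: one centers balls at points \emph{of} $[x,y]$, starting at $x$ (whose projection is $x_b$) and marching toward $y$ (whose projection is $y$ itself), each ball of radius just under the center's distance to $b$ so that disjointness from $b$ is automatic; the contraction property then bounds how far the projection can move across each ball, and since the projection must travel from $x_b$ all the way to $y$, the segment is forced to pass $\kappa$-close to $x_b$. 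Your acknowledged worry that the balls you chose might not be disjoint from $b$ is a symptom of having centered them in the wrong place, and disappears with the correct choice. The final absorption step --- upgrading a self-referential bound $r \leq \nn_1\kappa(\Norm{x_b}+r)$ to $r \leq \nn_2\kappa(\Norm{x_b})$ via concavity and sublinearity of $\kappa$ --- is fine and is indeed the standard trick. As written, though, the middle of the argument is not repairable without restructuring it, and the honest summary is that you, like the paper, are relying on \cite{MQZ20} for this statement.
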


The following lemma provides an equivalent property to being $\kappa$-contracting, it's often easier to work with this property than with the original $\kappa$-contracting definition.

\begin{lemma}[Lemma 2.9 \cite{MQZ20}] \label{lem: contracting implies projection contracting}
A geodesic ray is $\kappa$--contracting if and only if there exists a constant $\nn\geq 0$ such that for any ball $B$ centered at $x$ and disjoint from $b$, we have 
\[ diam(\pi_b(B)) \leq \nn \kappa(x_b). \]

\end{lemma}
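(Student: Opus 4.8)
\textbf{Proof proposal for Lemma \ref{lem: contracting implies projection contracting}.}

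The plan is to prove the two implications separately. The direction ``$\kappa$-contracting implies the $x_b$-version'' is the substantive one, and it rests on comparing $\kappa(\Norm{x})$ with $\kappa(\Norm{x_b})$. First I would observe that since $x_b$ is the nearest-point projection of $x$ to $b$, and $b$ passes through the basepoint $\go$, the CAT(0) geometry forces $\Norm{x_b} \le \Norm{x}$: indeed $x_b$ is the closest point of $b$ to $x$, so $d(x,x_b) \le d(x,\go)$, and then by convexity of the metric (or by a direct comparison-triangle argument on the triangle $\go, x, x_b$ with the right angle at $x_b$) one gets $\Norm{x_b}^2 + d(x,x_b)^2 \le \Norm{x}^2$, hence $\Norm{x_b} \le \Norm{x}$. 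Since $\kappa$ is monotone increasing, this gives $\kappa(x_b) \le \kappa(x)$, so $diam(\pi_b(B)) \le \nn\kappa(x) $ does \emph{not} immediately follow from $diam(\pi_b(B)) \le \nn\kappa(x_b)$ in that direction — rather it is the reverse inequality we need, so I would run the argument the other way: assume the $x_b$-bound and prove the $\Norm{x}$-bound, which is immediate because $\kappa(x_b) \le \kappa(x)$ gives $diam(\pi_b(B)) \le \nn\kappa(x_b) \le \nn\kappa(x)$. So that implication is essentially free once the basepoint-projection distance inequality is recorded.

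For the harder direction — ``$\kappa$-contracting (i.e.\ the $\Norm{x}$-bound holds) implies the $\kappa(x_b)$-bound'' — the idea is that we may assume $\Norm{x}$ is comparable to $\Norm{x_b}$ up to an additive error controlled by the radius of the ball, and then absorb that error sublinearly. Concretely, given a ball $B$ centered at $x$ with radius $r = d(x, b)$ disjoint from $b$ (radius larger than $d(x,b)$ gives empty interior claims or we just take the critical radius), note $d(x, x_b) = r$, so $\Norm{x} \le \Norm{x_b} + r$. If $r \le \Norm{x_b}$ then $\Norm{x} \le 2\Norm{x_b}$, and since $\kappa$ is concave with $\kappa(0)\ge 1$ we have $\kappa(2t) \le 2\kappa(t)$, giving $\kappa(x) \le 2\kappa(x_b)$ and hence $diam(\pi_b(B)) \le \nn\kappa(x) \le 2\nn\kappa(x_b)$. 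If instead $r > \Norm{x_b}$, then the whole ball $B$ of radius $r$ centered at $x$ contains $\go \in b$ unless... — here I would instead argue that $diam(\pi_b(B))$ is trivially bounded by $\Norm{x_b} + (\text{something})$: since $B$ is disjoint from $b$ but $d(x,\go) = $ could be small, one can bound $diam(\pi_b(B)) \le 2 d(x,\go) \le 2(\Norm{x_b}+r)$, which is not obviously sublinear. The cleaner route in this case: enlarge $\kappa$ if necessary so that $\kappa(t) \ge 1$ for all $t$ (already assumed), and note that when $\Norm{x_b}$ is bounded, $x_b$ lies in a bounded neighborhood of $\go$, $\pi_b(B)$ is contained in a bounded set, and we may increase the constant $\nn$ to cover this compact range. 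So I would split into ``$\Norm{x_b}$ large'' (where $\Norm{x} \le 2\Norm{x_b}$ eventually, using that a ball disjoint from $b$ has $r = d(x,b) \le d(x, x_b)$ and comparing, perhaps after noting $r$ cannot exceed $\Norm{x}$ by much when $x_b$ is far out) and ``$\Norm{x_b}$ bounded'' (absorb into $\nn$).

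I expect the main obstacle to be the case analysis in the forward direction when the ball radius $r = d(x,b)$ is large compared to $\Norm{x_b}$: one has to rule out, or handle, the situation where $x$ is close to $\go$ but the projection diameter is still being measured against $\kappa(x_b)$ with $\Norm{x_b}$ small. The resolution I will use is the standard one for these $\kappa$-statements: first reduce to balls whose radius equals the projection distance $d(x,b)$ (larger balls only shrink, and this is the worst case, following the convention in \cite{QR19,MQZ20}); then use $\Norm{x} \le \Norm{x_b} + d(x,b) \le \Norm{x_b} + d(x,\go) = 2\Norm{x}$... — more carefully, use the right-angle comparison $\Norm{x}^2 \ge \Norm{x_b}^2 + d(x,x_b)^2$ to get $d(x,x_b) \le \Norm{x}$ and hence $\Norm{x} \le \Norm{x_b} + \Norm{x}$ is vacuous, so instead use it as $\Norm{x_b} \le \Norm{x}$ together with the triangle inequality $\Norm{x} \le \Norm{x_b} + d(x_b,x) \le 2\Norm{x_b} + (d(x,x_b)-\Norm{x_b})$; cleanest is: either $d(x,x_b) \le \Norm{x_b}$, whence $\Norm{x} \le 2\Norm{x_b}$ and concavity finishes it, or $d(x,x_b) > \Norm{x_b}$, whence $\Norm{x_b} < d(x,x_b) \le \Norm{x}$ so $\kappa(x_b) \le \kappa(x)$, but also the comparison triangle inequality $\Norm{x}^2 \ge \Norm{x_b}^2 + d(x,x_b)^2 > 2\Norm{x_b}^2$ combined with $d(x,b) = d(x,x_b)$ bounds $diam(\pi_b(B))$ against $\nn\kappa(x)$ and we relate $\kappa(x)$ to $\kappa(x_b)$ via $\Norm{x} \le \sqrt{2}\, d(x,x_b)$ plus a linear-in-$\Norm{x_b}$ term — at which point invoking Lemma \ref{lem: CAT(0) distance can't be sublinear}-style sublinearity bookkeeping, or simply enlarging the constant, closes the gap. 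I will present the concavity/doubling estimate $\kappa(\lambda t) \le \lambda \kappa(t)$ for $\lambda \ge 1$ as the key technical input.
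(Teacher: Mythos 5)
The paper itself does not prove this lemma: it is imported verbatim as Lemma 2.9 of \cite{MQZ20}, so there is no in-paper argument to compare against, and your proposal must stand on its own. Judged that way, your easy implication is correct: since $\pi_b$ fixes $\go=b(0)$ and is $1$-Lipschitz (or by the right-angle comparison at $x_b$), one has $\Norm{x_b}\leq\Norm{x}$, so monotonicity of $\kappa$ turns the $\kappa(x_b)$-bound into the $\kappa(\Norm{x})$-bound with the same constant. The first half of your hard direction is also fine: when $d(x,x_b)\leq\Norm{x_b}$ you get $\Norm{x}\leq 2\Norm{x_b}$, and the doubling estimate $\kappa(2t)\leq 2\kappa(t)$ (valid by concavity together with $\kappa(0)\geq 1>0$) gives the conclusion with constant $2\nn$.

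The genuine gap is the remaining case $d(x,x_b)>\Norm{x_b}$, which your write-up does not close. Your first proposed fix --- ``$x_b$ lies in a bounded neighborhood of $\go$, so absorb into the constant'' --- rests on a false premise: $d(x,x_b)>\Norm{x_b}$ does not make $\Norm{x_b}$ uniformly bounded (take $\Norm{x_b}=n$ and $d(x,x_b)=n^2$), and even when $\Norm{x_b}$ really is small, the claim that $\pi_b(B)$ is then contained in a set of uniformly bounded diameter is precisely the statement to be proved, not a consequence of properness. Your second proposed fix only yields $\Norm{x}\leq\Norm{x_b}+d(x,x_b)\leq 2\,d(x,x_b)$, hence a bound of the form $\nn\kappa(d(x,x_b))$; since $d(x,x_b)$ can be arbitrarily large compared with $\Norm{x_b}$, no enlargement of the constant converts $\kappa(d(x,x_b))$ into $\kappa(\Norm{x_b})$, and the closing appeal to ``sublinearity bookkeeping, or simply enlarging the constant'' is exactly where the missing content lives. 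This case is the entire point of the lemma (it is what makes the $\kappa(x_b)$ formulation look strictly stronger), and in \cite{MQZ20} it is handled by an actual geometric argument --- relocating the center of the ball along $[x,x_b]$, using that every point of $[x,x_b]$ projects to $x_b$, and combining this with a chaining/slimness argument in the spirit of Lemma \ref{lem: contracting implies slim} --- not by a concavity estimate alone. As written, the proposal establishes one and a half of the two implications.
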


\section{Bounded geodesic image property}

We fix $X$ to be a proper CAT(0) space for the rest of the paper and we fix a base point $\go \in X$.

\begin{definition} (Bounded geodesic image)
A geodesic ray $b \subseteq X$ is said to have the \emph{bounded geodesic image property} if for any other geodesic ray $c$ with $c(0)=b(0),$  and any ball $B$ centered at $c(t)$ for some $t$, if $B \cap b =\emptyset,$ we have $diam(\pi_b(B)) \leq \nn$ for some constant $\nn \geq 0$ depending only on $c$ (and not on $t$). Again, we emphasize that the constant $\nn$ is allowed to vary when we vary the geodesics $c$ which we project to $b$.
\end{definition}

The next lemma shows that $\kappa$-contracting geodesic rays satisfy the bounded geodesic image property. All of our subsequent statements hold for all geodesic rays satisfying the bounded geodesic image property and are not specific to $\kappa$-contracting geodesics.
\begin{lemma}($\kappa$-contracting implies bounded geodesic image)\label{lem: contracting implies BGI}
Let $b$ and $c$ be two distinct geodesic rays in a proper CAT(0) space with $b(0)=c(0)=\go.$ If $b$ is $\kappa$--contracting, then:
\begin{enumerate}
    \item The projection of $c$ on $b$, denoted $\pi_b(c)$, is a bounded set.
  
    \item If $B$ is a ball centered at some point in $c$ with $B \cap b =\emptyset$, then $\pi_b(B)$ is also a bounded set. That is, $\kappa$-contracting geodesic rays satisfy the bounded geodesic image property.

\end{enumerate}
\end{lemma}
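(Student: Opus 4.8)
The plan is to prove both parts by combining the $\kappa$-contracting property with the convexity lemma, Lemma \ref{lem: CAT(0) distance can't be sublinear}, which is the engine that converts a sublinear bound into an honest bound.

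For part (1), I would argue by contradiction. Suppose $\pi_b(c)$ is unbounded. Since $c$ starts at $\go = b(0)$ and $\pi_b(\go) = \go$, and since the projection map $\pi_b$ is continuous along $c$, unboundedness of $\pi_b(c)$ means there is a sequence of parameters $t_i \to \infty$ with $d(\go, \pi_b(c(t_i)))$ unbounded. Now I want to bound $d(c(t_i), b)$ from above by something sublinear in $\|c(t_i)\| = t_i$. The idea: consider the ball $B_i$ centered at $c(t_i)$ of radius $r_i = d(c(t_i), b)$; this ball is (essentially) disjoint from $b$, or rather its open version is, so Lemma \ref{lem: contracting implies projection contracting} gives $\mathrm{diam}(\pi_b(B_i)) \le \nn \kappa((c(t_i))_b)$. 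But $\pi_b(B_i)$ contains both $\go$ (the projection of the initial segment, since for large-enough radius the ball captures a point near $\go$ — actually I need to be careful here) and the point $(c(t_i))_b$ which is far out on $b$. The cleaner route: the function $f(t) = d(c(t), b)$ is convex in $t$ (distance to a convex set composed with a geodesic), $f(0) = 0$, so by Lemma \ref{lem: CAT(0) distance can't be sublinear} it suffices to show $f(t_i) \le \nn\kappa(t_i)$ along some unbounded sequence to conclude $f \equiv 0$, i.e.\ $c \subseteq b$, contradicting that $b, c$ are distinct. So the real content is: \emph{show $d(c(t), b) \le \nn \kappa(t)$ for an unbounded set of $t$, given that $\pi_b(c)$ is unbounded}. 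This is where the contracting hypothesis enters: if $\pi_b(c)$ is unbounded then for infinitely many $t$ the point $c(t)$ projects far out, and a ball around $c(t)$ just missing $b$ has projection diameter $\le \nn\kappa$ of its distance to $\go$; comparing with the fact that moving along $c$ from $0$ to $t$ drags the projection from $\go$ all the way out forces $d(c(t),b)$ to be comparably large unless it is sublinearly small — I expect this comparison, done carefully with the triangle inequality and the $\kappa$-slimness Lemma \ref{lem: contracting implies slim}, to pin down $f(t_i) \le \nn\kappa(t_i)$.

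Actually, a more robust approach to part (1) that avoids the case analysis: use Lemma \ref{lem: contracting implies slim} directly. For any $t$, pick $y = c(t) \in c$ and apply the slimness lemma with $x = c(t)$: it gives $d((c(t))_b, [c(t), y]) \le \nn\kappa((c(t))_b)$, but with $x = y$ this is degenerate. Instead take $y$ to be a point of $b$ far out and $x = c(t)$; slimness says the projection $(c(t))_b$ is $\nn\kappa$-close to the geodesic $[c(t), y]$. Letting $y \to b(\infty)$ along $b$, the geodesics $[c(t), y]$ converge to a geodesic ray from $c(t)$ asymptotic to $b(\infty)$; since $c$ and $b$ are both asymptotic-class representatives only if they're equivalent, and here they start at the same point, distinctness of $b(\infty)$ from $c(\infty)$ should be used. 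The key estimate is to show the projection of $c(t)$ onto $b$ stays within a bounded distance of $\go$. I would bound $d(\go, (c(t))_b)$: along $c$, the nearest-point projection to $b$ is a continuous path starting at $\go$; if it ever reaches a point $p$ on $b$ with $\|p\|$ large, then taking a ball around the corresponding $c(t)$ of radius $d(c(t), b) = d(c(t), p)$, disjointness from $b$ forces $\mathrm{diam}(\pi_b) \le \nn\kappa(\|p\|)$; but $\go$ also projects into any slightly larger ball around $c(t)$ whenever $d(c(t), \go) \ge d(c(t), b)$, which holds since $\go \in b$. So $d(\go, p) \le \nn\kappa(\|p\|)$, and since $\go, p \in b$, $d(\go, p) = \|p\|$, giving $\|p\| \le \nn\kappa(\|p\|)$, forcing $\|p\|$ bounded by sublinearity. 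Hence $\pi_b(c)$ is bounded — this proves (1) cleanly.

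For part (2), given a ball $B$ centered at some $c(t_0) \in c$ with $B \cap b = \emptyset$, I want $\pi_b(B)$ bounded. By part (1), $\pi_b(c)$ is bounded, say contained in a ball of radius $R$ about $\go$ in $b$. For an arbitrary $z \in B$, $d(z, c(t_0)) < \mathrm{radius}(B) =: \rho$, and since $\pi_b$ is $1$-Lipschitz (it is $1$-Lipschitz for nearest-point projection to a convex set in CAT(0)), $d(\pi_b(z), \pi_b(c(t_0))) \le d(z, c(t_0)) < \rho$, and $\pi_b(c(t_0)) = (c(t_0))_b$ lies within $R$ of $\go$ by part (1); hence $d(\pi_b(z), \go) < R + \rho$, so $\pi_b(B)$ is bounded with diameter $\le 2(R+\rho)$. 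This is immediate and needs no further contracting input beyond part (1).

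The main obstacle I anticipate is the technical care in part (1): making the "$\go$ also projects into the ball" step rigorous — one must handle the open-versus-closed ball issue (the contracting definition uses a ball disjoint from $b$, so I should work with the closed ball of radius strictly less than $d(c(t), b)$ and take a limit, or equivalently use Lemma \ref{lem: contracting implies projection contracting} which is stated in a form convenient for this) — and verifying that the nearest-point projection of the path $c$ onto $b$ really does behave continuously so that "reaching a far point $p$" is the right dichotomy. Everything else is a routine application of $1$-Lipschitzness of $\pi_b$, convexity of distance functions, and Lemma \ref{lem: CAT(0) distance can't be sublinear}.
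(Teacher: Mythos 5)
Your part (2) is fine (and essentially the paper's: once the projection of $c$ is within a bounded distance $R$ of $\go$, Lipschitzness of $\pi_b$ or Lemma \ref{lem: contracting implies projection contracting} finishes it). The problem is part (1). Your first sketch is explicitly left incomplete (``I expect this comparison \dots to pin down $f(t_i)\le \nn\kappa(t_i)$''), and your second, ``clean'' argument contains a genuine error at its central step. You take a ball $B$ around $c(t)$ that is disjoint from $b$, apply the contracting property to get $\mathrm{diam}(\pi_b(B))\le \nn\kappa(\|p\|)$, and then assert that $\go$ ``also projects into any slightly larger ball around $c(t)$,'' concluding that $\pi_b(B)$ contains both $\go$ and $p$ and hence $\|p\|=d(\go,p)\le \nn\kappa(\|p\|)$. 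But $\go\in b$, so \emph{any} ball around $c(t)$ containing $\go$ meets $b$ and the contracting hypothesis no longer applies to it; conversely, any ball to which the hypothesis applies excludes $\go$. The inequality $d(c(t),\go)\ge d(c(t),b)$ that you invoke points in exactly the wrong direction: it guarantees $\go\notin B$. So the estimate $d(\go,p)\le\nn\kappa(\|p\|)$ does not follow, and the contradiction with sublinearity evaporates. (One could try to repair this by subdividing $c([0,t])$ into a chain of balls each disjoint from $b$ and summing the projection bounds, as in the strongly contracting Morse argument, but with a sublinear gauge this is delicate and you have not done it.)

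The paper's route avoids this entirely and is worth internalizing: assuming $\pi_b(c)$ unbounded, pick $c(s_i)$ whose projections $b(t_i)$ escape to infinity, and apply the $\kappa$-slimness Lemma \ref{lem: contracting implies slim} to $x=c(s_i)$ and $y=\go\in b$, whose connecting geodesic $[\go,c(s_i)]$ is a subsegment of $c$. This gives $d(b(t_i),c)\le d\bigl(b(t_i),[\go,c(s_i)]\bigr)\le \nn\kappa(t_i)$ directly, with no disjoint ball needed. Then the convexity Lemma \ref{lem: CAT(0) distance can't be sublinear} is applied to $f(t)=d(b(t),c)$ --- note: the distance from points of $b$ to the set $c$, the reverse of the function you consider --- forcing $f\equiv 0$ and hence $b=c$, a contradiction. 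You correctly identified the two key tools (slimness and the convexity lemma) but did not find the configuration in which they combine; as written, part (1) of your proof does not go through.
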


\begin{proof}
Let $b,c$ be as in the statement of the lemma.

\begin{enumerate}
    \item  By way of contradiction, assume that $c$ has an unbounded projection onto $b$. This yields a sequence $c(s_i)$ such that projections $b(t_i)$ of $c(s_i)$ leave every ball of radius $i$ around $\go$. Applying Lemma \ref{lem: contracting implies slim} to the geodesics $[\go,c(s_i)]$ gives a constant $\nn \geq 0$ so that  $d(b(t_i),c) \leq d(b(t_i), [\go,c(s_i)]) \leq \nn \kappa(t_i),$ for all $i.$ This gives an unbounded sequence $t_i \in [0,\infty)$ so that $d(b(t_i),c) \leq \nn \kappa(t_i).$ Therefore, applying Lemma \ref{lem: CAT(0) distance can't be sublinear} to $f(t)=d(b(t),c)$, we get that $f=0$ and hence $b=c$ which contradicts the assumption. 
    
    \item This is immediate using the first part of this lemma and Lemma \ref{lem: contracting implies projection contracting}.
\end{enumerate}

\end{proof}

\begin{lemma}\label{Uniform over geodesics}
Let $c$ be a geodesic ray with the bounded geodesic image property, and let $c'$ be a distinct geodesic ray with $c(0)=c'(0).$ For any $x \in c' $ and $y \in c$, we have 

\[ d(x, x_c)+d(x_c, y)-\nn-1 \leq d(x,y) \leq d(x, x_c)+d(x_c, y) \]
 for some constant $\nn$ depending only on $c'$.
\end{lemma}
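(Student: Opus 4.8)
The upper bound $d(x,y) \le d(x,x_c) + d(x_c,y)$ is just the triangle inequality, so the content is in the lower bound, which says that the path $x \to x_c \to y$ is nearly geodesic up to an additive error depending only on $c'$. The plan is to estimate $d(x,y)$ from below by tracking how the geodesic $[x,y]$ behaves relative to $c$. First I would let $p$ be the nearest-point projection of $x_c$ onto the geodesic segment $[x,y]$; equivalently, pick $p \in [x,y]$ realizing $d(x_c, [x,y])$. Since $c$ is a geodesic ray and $x \in c'$, the projection $x_c$ of $x$ to $c$ lies on $c$, and I will want to say that $[x,y]$ comes close to $x_c$ — this is exactly where the bounded geodesic image property enters.

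The key step: apply the bounded geodesic image property to control $\pi_c([x,y])$. Here $y \in c$ already, so $y = y_c$. The segment $[x,y]$ is not quite a ray starting at $c(0)$, but I would either extend it / reduce to the ray case, or invoke Lemma~\ref{lem: contracting implies slim}-type slimness: the point is that $\pi_c([x,y])$ is a bounded set, and since $y \in [x,y] \cap c$ we get $x_c \in \pi_c(X)$ within bounded distance of $[x,y]$. More precisely, I expect to show there is a point $p \in [x,y]$ with $d(p, x_c) \le \nn$ where $\nn$ depends only on $c'$ (via the BGI constant for projecting $c'$, and the segments $[x,y]$ with $x\in c'$, to $c$). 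Granting this, split $d(x,y) = d(x,p) + d(p,y)$ and estimate $d(x,p) \ge d(x, x_c) - d(x_c, p) \ge d(x,x_c) - \nn$ and similarly $d(p,y) \ge d(x_c, y) - d(x_c,p) \ge d(x_c,y) - \nn$; adding gives $d(x,y) \ge d(x,x_c) + d(x_c,y) - 2\nn$, which after renaming the constant yields the claimed inequality (the "$-1$" is harmless slack).

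The main obstacle is pinning down that $d(x_c, [x,y])$ is bounded by a constant depending only on $c'$, uniformly over $x \in c'$ and $y \in c$. The subtlety is twofold: (a) the relevant segments $[x,y]$ start at a point $x$ on $c'$ rather than at $\go$, so I cannot directly quote the BGI definition, which is stated for rays emanating from $c(0)$; (b) I must ensure the constant does not blow up as $y$ recedes to infinity along $c$. For (a), the cleanest fix is to consider the geodesic triangle $\go, x, y$ and use that $[\go, x] \subseteq c'$ together with slimness of $\kappa$-contracting geodesics (Lemma~\ref{lem: contracting implies slim}) to relate $[x,y]$ to $[\go, x] \cup [\go, y]$, then project; alternatively, extend $[x,y]$ slightly or approximate $x$ by $\go$-based rays. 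For (b), convexity of the distance function $d(x_c, -)$ along $[x,y]$ combined with the fact that $\pi_c([x,y])$ is bounded (Lemma~\ref{lem: contracting implies BGI}(2), applied with a ball around the far point of $[x,y]$) should give the needed uniform bound: if $[x,y]$ stayed far from $x_c$, then $\pi_c$ of a suitable ball along $[x,y]$ would have large diameter, contradicting bounded geodesic image. Once this uniform bound is secured, the rest is the two-line triangle-inequality computation above.
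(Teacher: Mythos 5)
Your reduction of the lemma to the claim that $[x,y]$ passes within a \emph{uniformly} bounded distance of $x_c$ is where the argument breaks. That claim is exactly the kind of statement that fails once you leave the strongly contracting world: for a merely sublinearly contracting (or BGI) ray $c$, the best available control of this type is the $\kappa$-slimness of Lemma \ref{lem: contracting implies slim}, which gives $d(x_c,[x,y])\leq \nn\,\kappa(x_c)$ --- sublinear in $\Norm{x_c}$ but unbounded. Feeding that into your two-line computation would only yield $d(x,y)\geq d(x,x_c)+d(x_c,y)-2\nn\kappa(x_c)$, which is strictly weaker than the constant-error inequality being proved. Your proposed fixes do not close this: (a) slimness gives the sublinear bound just described, and is in any case only stated for $\kappa$-contracting rays rather than general BGI rays; (b) the convexity-plus-BGI argument for uniformity is not carried out, and the BGI property as defined only controls projections of balls centered on rays emanating from $\go$ (with a constant depending on that ray), so balls centered along $[x,y]$ are not covered and no uniform constant is available. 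Note also that your target claim, if true, would be strictly stronger than the lemma itself, which is a warning sign.

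The paper's proof sidesteps this entirely by comparing \emph{projections} rather than points. Take the ball $B$ centered at $x\in c'$ of radius $r=d(x,x_c)-1$; it is disjoint from $c$ and centered on the ray $c'$, so a single application of BGI gives $\operatorname{diam}(\pi_c(B))\leq\nn$ with $\nn$ depending only on $c'$, hence $d(x_c,z_c)\leq\nn$ for the point $z\in[x,y]$ with $d(x,z)=r$. The lower bound then follows from $d(x,y)=d(x,z)+d(z,y)$, the fact that the projection $\pi_c$ is $1$-Lipschitz and $y=y_c$ (so $d(z,y)\geq d(z_c,y)\geq d(x_c,y)-d(x_c,z_c)$), and $d(x,z)=d(x,x_c)-1$. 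No statement about $z$ being close to $x_c$ is needed --- only that its projection is. If you want to salvage your write-up, replace ``find $p\in[x,y]$ with $d(p,x_c)\leq\nn$'' by ``find $z\in[x,y]$ with $d(z_c,x_c)\leq\nn$'' and run the estimate through $d(z,y)\geq d(z_c,y)$.
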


\begin{proof}

Let $c,c',x,y$ be as in the statement of the lemma. The right-hand side inequality holds by the triangle inequality. For the other side, consider a ball $B$ around $x \in c'$ whose radius $r=d(x,x_c)-1$, and let $z$ be the point on $[x,y]$ with $d(x,z)=r$. Notice that since $c$ has the bounded geodesic image property and since $z \in B$, we have that \[d(x_c,z_c) \leq \nn.\]
Notice that

\begin{align*}
d(x,y)-d(x, x_c) &=d(x,z)+d(z,y)-d(x,x_c)\\
                                          &=(d(x,x_c)-1)+d(z,y)-d(x,x_c)\\
                                          &=d(z,y)-1 \\
                                          &\geq d(z_c, y)-1\\
                                          &=d(x_c,y)-d(x_c, z_c)-1 \\
                                          &\geq d(x_c, y)- \nn -1,
\end{align*} 
which yields

\[ d(x,y) \geq d(x, x_c)+d(x_c, y)-\nn-1.\]

\end{proof}

The following is the key to showing that geodesic rays with the bounded geodesic image property (in particular, $\kappa$-contracting geodesic rays) define visibility points in the visual boundary.

\begin{lemma} \label{key to visibility}
Let $c, c'$ be two distinct geodesic rays starting at $\go$ such that $c$ has the bounded geodesic image property. We have the following:

\begin{enumerate}
    \item  If $b_c,b_{c'}$ denote the Busemann functions of $c,c'$ respectively, then we have $b_{c}(c'(s))\rightarrow \infty$ and $b_{c'}(c(t)) \rightarrow \infty$ as $s,t \rightarrow \infty$.
    
    \item If $d, d'$ are geodesic rays with $d(0)=d'(0)$ and $d(\infty)=c(\infty)$, $d'(\infty)=c'(\infty)$, then $b_{d}(d'(s))\rightarrow \infty$ and $b_{d'}(d(t)) \rightarrow \infty$ as $s,t \rightarrow \infty$.
    \item If $h,h'$ are two horofunctions associated to $c,c'$ respectively, then $h(c'(s)) \rightarrow \infty$ and $h'(c(t)) \rightarrow \infty$ as $s,t \rightarrow \infty.$

\end{enumerate}

\end{lemma}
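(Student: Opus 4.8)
The plan is to prove the three parts in order, using part (1) as the engine and deriving (2) and (3) by the by-now-standard observation that changing basepoint or adding a constant perturbs a Busemann function only by a bounded amount. For part (1), fix the distinct geodesic rays $c,c'$ from $\go$, with $c$ having the bounded geodesic image property, and let $\nn$ be the constant furnished by Lemma \ref{Uniform over geodesics} (it depends only on $c'$). First I would unwind the definition of the Busemann function: for $x=c'(s)$,
\[
b_c(c'(s)) = \lim_{t\to\infty}\bigl[d(c'(s),c(t))-t\bigr].
\]
Applying Lemma \ref{Uniform over geodesics} with $x=c'(s)$ and $y=c(t)$, and noting that for $t$ large enough the projection point $x_c$ satisfies $d(x_c,c(t))=t-d(\go,x_c)$ (since $x_c\in c$ and $c$ is a ray from $\go$), I get
\[
d(c'(s),c(t)) \ge d(c'(s),x_c) + d(x_c,c(t)) - \nn - 1 = d(c'(s),x_c) + t - d(\go,x_c) - \nn - 1,
\]
so that $b_c(c'(s)) \ge d(c'(s),x_c) - d(\go,x_c) - \nn - 1$. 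Thus it suffices to show $d(c'(s),x_c) - d(\go,x_c)\to\infty$ as $s\to\infty$, where $x_c=\pi_c(c'(s))$. By Lemma \ref{lem: contracting implies BGI}(1), the projection $\pi_c(c')$ is bounded, so $d(\go,x_c)$ stays bounded (say by some $R$) as $s$ varies. Since $x_c$ lies in a bounded region and $c'(s)\to c'(\infty)$ leaves every bounded set, $d(c'(s),x_c)\to\infty$, and hence $b_c(c'(s))\to\infty$. The symmetric statement $b_{c'}(c(t))\to\infty$ needs a small extra remark, because $c'$ is not assumed to have the bounded geodesic image property; here I would instead argue directly that $\pi_{c'}(c)$ is bounded — if it were unbounded, the projections of points $c(t_i)$ would leave every ball, and applying convexity of $t\mapsto d(c'(t),c)$ together with $\kappa$-slimness... no: cleaner is to note that $c$ has the bounded geodesic image property, which by symmetry of the setup is exactly what is needed, so I would simply swap the roles of $c$ and $c'$ is \emph{not} legitimate. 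The honest fix: observe that since $c$ has bounded geodesic image, $\pi_c(c')$ is bounded; but for $b_{c'}(c(t))$ I need $\pi_{c'}(c)$ bounded. I will instead use convexity of the function $s\mapsto d(c'(s),c(t))-d(c'(0),c(t))$... Actually the simplest route is: $b_{c'}(c(t)) = \lim_{s}[d(c(t),c'(s))-s]$, and since $b_{c'}$ is 1-Lipschitz and convex (Lemma \ref{lem: convexity of horofunctions}) along the ray $c$, with $b_{c'}(c(0))=b_{c'}(\go)$ finite, either $b_{c'}(c(t))\to\infty$ or $b_{c'}$ is bounded above on $c$, and the latter would force (by convexity, since a convex function bounded on a ray is nonincreasing) $b_{c'}(c(t))\to L$ for some $L\le b_{c'}(\go)$, i.e. $c$ eventually enters and stays in the horoball $b_{c'}^{-1}((-\infty,L])$; I would then derive a contradiction by combining this with the already-proven fact $b_c(c'(s))\to\infty$ and the triangle-type inequality relating the two Busemann cocycles. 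The cleanest packaging, which I would adopt, is to prove the single two-sided statement symmetrically by applying Lemma \ref{Uniform over geodesics} with the roles of the two rays interchanged is not available — so I expect the main obstacle to be precisely this asymmetry, and I would resolve it by proving a bounded-geodesic-image-free version: for \emph{any} distinct rays $c,c'$ from $\go$, the projection of the one with bounded geodesic image onto itself controls things, and for the other direction use that $b_{c'}(c(t))-b_{c'}(c(0)) \ge d(c(t),x_{c'}') - d(\go,x_{c'}') - (\text{bounded})$ would again need $\pi_{c'}(c)$ bounded; since that may fail, I instead bound $b_{c'}(c(t))$ below by observing $d(c(t),c'(s)) \ge d(c(t),\go) - d(\go, c'(s)) \cdot 0$... this is getting circular, so in the writeup I will commit to the convexity-contradiction argument of the previous sentence.

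For part (2), let $d,d'$ be rays with $d(0)=d'(0)=:p$ and $d(\infty)=c(\infty)$, $d'(\infty)=c'(\infty)$. By Proposition 8.2 of \cite{BH1} (cited in the excerpt), $d,d'$ are the unique rays from $p$ in their respective classes. I would use Theorem \ref{thm: horofunctions are Busemann}(2): the Busemann function of a ray depends, up to an additive constant, only on its endpoint in $\partial X$. Hence $b_d = b_c + a$ and $b_{d'} = b_{c'} + a'$ for constants $a,a'\in\mathbb R$. Then $b_d(d'(s)) = b_c(d'(s)) + a$, and since $d'$ is asymptotic to $c'$ we have $\sup_t d(d'(t),c'(t))\le \nn'<\infty$, so by the 1-Lipschitz property of $b_c$, $|b_c(d'(s)) - b_c(c'(s))|\le \nn'$; combined with part (1), $b_d(d'(s)) \ge b_c(c'(s)) - \nn' + a \to \infty$. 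The same argument gives $b_{d'}(d(t))\to\infty$. This step is routine.

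Part (3) is then immediate: by Theorem \ref{thm: horofunctions are Busemann}(1), any horofunction $h$ associated to $c$ is of the form $b_c + \nn$ for a constant $\nn$, and likewise $h' = b_{c'} + \nn'$; so $h(c'(s)) = b_c(c'(s)) + \nn \to \infty$ and $h'(c(t)) = b_{c'}(c(t)) + \nn' \to \infty$ by part (1). I expect the only genuine difficulty to lie in part (1), specifically in handling the second limit $b_{c'}(c(t))\to\infty$ without assuming $c'$ has bounded geodesic image; the resolution is the convexity argument sketched above (a convex $1$-Lipschitz function on a geodesic ray either tends to $+\infty$ or is bounded above and eventually monotone, and the bounded-above case is excluded by using that $c$ escapes the horoball of $c'$, which in turn follows from the first limit together with Lemma \ref{Uniform over geodesics}). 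Everything else reduces to bookkeeping with additive constants via Theorem \ref{thm: horofunctions are Busemann}.
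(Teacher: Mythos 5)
Your treatment of the first limit in part (1) and of parts (2) and (3) is correct and essentially identical to the paper's: Lemma \ref{Uniform over geodesics} plus boundedness of $\pi_c(c')$ gives $b_c(c'(s))\to\infty$, and (2), (3) are bookkeeping with additive constants via Theorem \ref{thm: horofunctions are Busemann}. The genuine gap is the second limit $b_{c'}(c(t))\to\infty$, which you correctly identify as the delicate point but never actually prove. The convexity dichotomy you fall back on (a convex $1$-Lipschitz function on a ray either tends to $+\infty$ or is bounded above) is fine, but your proposed exclusion of the bounded-above case --- ``combine with $b_c(c'(s))\to\infty$ and a triangle-type inequality'' --- is not an argument: divergence of $b_c$ along $c'$ does not in general force divergence of $b_{c'}$ along $c$ (for perpendicular rays in $\mathbb{R}^2$ both restrictions are identically $0$, and there is no formal implication from one divergence to the other without further geometric input). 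Any honest completion of the contradiction ends up needing a lower bound on $d(c'(s),c(t))$ that is uniform in \emph{both} variables, which is exactly the estimate the direct argument uses.

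The fix is already contained in your own computation for the first limit, and it is what the paper does. You derived
\[
d(c'(s),c(t)) \;\ge\; d(c'(s),x_c) + t - d(\go,x_c) - \nn - 1,
\]
with $x_c=\pi_c(c'(s))$ and $d(\go,x_c)\le R$ uniformly in $s$ (by boundedness of $\pi_c(c')$). Since also $d(c'(s),x_c)\ge s-R$, this reads
\[
d(c'(s),c(t)) \;\ge\; s+t-2R-\nn-1,
\]
a bound symmetric in $s$ and $t$ even though only $c$ has the bounded geodesic image property. Subtracting $t$ and letting $t\to\infty$ gives your first limit; subtracting $s$ and letting $s\to\infty$ gives $b_{c'}(c(t))\ge t-2R-\nn-1\to\infty$, which is the second. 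No convexity dichotomy, no role-swapping, and no projection bound for $\pi_{c'}(c)$ is needed; the paper's proof is precisely this observation (with $R$ written as $\nn_s\le\nn$).
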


\begin{proof}
Let $c,c',d,d'$ and $\go$ be as in the statement of the lemma. 

\begin{enumerate}
    \item  Set $d_s=d(c'(s),c)$. Since $c$ has the bounded geodesic image property, there exists a constant $\nn$ depending only on $c'$ such that $diam(\pi_c(c')) \leq \nn.$ Hence, we have $d(\pi_{c}(c'(s)), \go)=\nn_s$ where $0 \leq \nn_s \leq \nn.$ By the triangle inequality, we have that $d_s \geq ||c'(s)||-\nn_s=s-\nn_s.$ For a fixed $s$, let $d_{s,t}=d(c'(s),c(t)).$ Notice that by Lemma \ref{Uniform over geodesics}, we have that

\begin{equation}\label{E1}
d_{s,t} \geq d_s +t -\nn_s - \nn-1 \tag{*}
\end{equation} or
$$d_{s,t} -t \geq (d_s- \nn)-(\nn_s +1).$$ Since the above is independent of $t,$ we get 
$$b_{c}(c'(s)) \geq (d_s- \nn)-(\nn_s +1).$$ By the triangle inequality, we have

$$b_{c}(c'(s)) \geq (||c'(s)||-\nn_s- \nn)-(\nn_s +1)=s-2\nn_s-\nn-1,$$ which implies that $b_{c}(c'(s)) \rightarrow \infty$ as $s \rightarrow \infty,$ which finishes the first part of the lemma. While the projection of $c$ to $c'$ isn't apriori bounded, the argument showing $b_{c'}(c(t)) \rightarrow \infty$ is the same. For completeness, we give the proof. Using equation (\ref{E1}) above, we have

 \begin{align*}
d_{s,t} &\geq d_s +t -\nn_s - \nn-1\\
&\geq s-\nn_s+t-\nn_s-\nn-1. 
\end{align*} 
Hence, $d(c'(s),c(t))-s=d_{s,t}-s \geq t-2\nn_s-\nn-1$. Now, fixing $t$ and letting $s \rightarrow \infty,$ gives that $$b_{c'}(c(t)) \geq t-2\nn_s-\nn-1.$$ Therefore, we have $b_{c'}(c(t)) \rightarrow \infty$ as $t\rightarrow \infty.$

\item This follows immediately since $b_d=b_c +\nn_1$ and $b_{d'}=b_d+\nn_2$ for some constants $\nn_1,\nn_2 \in \mathbb{R}$ by Theorem \ref{thm: horofunctions are Busemann}.

\item Similar to part (2), part (3) follows immediately as $h=b_c +\mm_1$ and $h'=b_{c'}+\mm_2$ for some constants $\mm_1,\mm_2 \in \mathbb{R}$ by Theorem \ref{thm: horofunctions are Busemann}.

\end{enumerate}

\end{proof}

We remark that since $\kappa$-contracting geodesic rays satisfy the bounded geodesic image property (Lemma \ref{lem: contracting implies BGI}), the above lemma gives part (3) of Theorem \ref{main theorem}.

The following lemma states that horospheres corresponding to a geodesics with the bounded geodesic image property are convergent.

\begin{lemma}\label{lem: BGI gives convergence of horospheres}  Let $X$ be a proper CAT(0) space and let $\zeta \in \partial X$. Suppose that $\zeta=c(\infty)$ for some geodesic ray $c$ with the bounded geodesic image property. If $h$ is a horofunction associated to $c$, then any sequence of $h$-horospheres must converge to $\zeta$.

\end{lemma}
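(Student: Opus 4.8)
The plan is to show that if $h$ is a horofunction associated to $c$ and $x_k \in H_k = h^{-1}(-k)$ with $k \to \infty$, then $x_k \to \zeta = c(\infty)$ in the visual topology on $\partial X$. First I would reduce to a cleaner statement: since adding a constant to $h$ only reindexes the horospheres, I may as well take $h = b_c$, the Busemann function of $c$. Then $b_c(x_k) \to -\infty$. The key geometric fact is that $b_c(x) = \lim_{t\to\infty}[d(x,c(t)) - t]$, so $b_c(x_k)$ very negative forces $x_k$ to be "far along" toward $\zeta$ in a controlled way. I would first observe that $\|x_k\| = d(\go, x_k) \to \infty$, since $h$ is $1$-Lipschitz and $h(\go) = 0$, giving $\|x_k\| \geq |h(x_k)| = k \to \infty$. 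So the $x_k$ escape to infinity and, by properness of $\overline X = X \cup \partial X$, after passing to a subsequence $x_k \to \xi$ for some $\xi \in \partial X$; it suffices to show every such subsequential limit equals $\zeta$.

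Next, for a contradiction suppose $\xi \neq \zeta$. Let $c''$ be the geodesic ray from $\go$ with $c''(\infty) = \xi$. By the definition of the visual topology, a subsequence of the geodesics $[\go, x_k]$ converges uniformly on compacta to $c''$. Now I would use Lemma~\ref{key to visibility}(1): since $c$ has the bounded geodesic image property and $c'' \neq c$, we have $b_c(c''(s)) \to \infty$ as $s \to \infty$. More precisely, the proof of that lemma shows $b_c(c''(s)) \geq s - 2\nn_s - \nn - 1$ with $\nn_s$ bounded by a constant depending only on $c''$; this is a \emph{uniform} lower bound along the ray $c''$. The strategy is to transfer this positive growth along $c''$ to a lower bound on $b_c(x_k)$, contradicting $b_c(x_k) \to -\infty$.

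To make the transfer, I would use convexity of $b_c$ (Lemma~\ref{lem: convexity of horofunctions}). Fix a large parameter $s_0$ with $b_c(c''(s_0)) \geq 1$, say, using the uniform bound above. Since $[\go, x_k] \to c''$ uniformly on compacta, for $k$ large the point $y_k$ on $[\go, x_k]$ at distance $s_0$ from $\go$ satisfies $d(y_k, c''(s_0)) < \epsilon$ for any preassigned $\epsilon$, hence $b_c(y_k) \geq b_c(c''(s_0)) - \epsilon \geq 1 - \epsilon > 0$ (using that $b_c$ is $1$-Lipschitz). Also $b_c(\go) = 0$. Since $y_k$ lies on the geodesic $[\go, x_k]$ and $b_c$ is convex along this geodesic, with $b_c(\go) = 0$ and $b_c(y_k) > 0$, convexity forces $b_c$ to be nondecreasing past $y_k$ along $[\go, x_k]$; in particular $b_c(x_k) \geq b_c(y_k) > 0$ for all large $k$. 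This contradicts $b_c(x_k) = -k \to -\infty$. Hence $\xi = \zeta$, and since this holds for every subsequential limit, $x_k \to \zeta$, proving that every sequence of $h$-horospheres is convergent.

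The main obstacle is the convexity step: one must be careful that the convex function $t \mapsto b_c([\go,x_k](t))$ genuinely cannot decrease after reaching a positive value when it started at $0$ — this is exactly the elementary fact that a convex function on an interval with a positive value at an interior point and value $0$ at the left endpoint must be nondecreasing to the right of that interior point, which I would state explicitly. A secondary technical point is ensuring the approximation $[\go, x_k] \to c''$ on the fixed compact interval $[0, s_0]$ really follows from $x_k \to \xi$; this is standard for CAT(0) spaces (geodesics to points converging in $\overline X$ converge uniformly on compacta), but I would cite or recall it. Everything else is bookkeeping with the $1$-Lipschitz property and the triangle inequality.
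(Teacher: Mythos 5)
Your proposal is correct and follows essentially the same route as the paper: reduce to $h=b_c$, extract a subsequential limit ray via properness, invoke Lemma~\ref{key to visibility} to get $b_c\to\infty$ along any distinct ray from $\go$, transfer this to the geodesics $[\go,x_k]$ using the visual topology and the $1$-Lipschitz property, and derive a contradiction with convexity of $b_c$ along $[\go,x_k]$. Your phrasing of the convexity step (a convex function vanishing at the left endpoint and positive at an interior point is nondecreasing thereafter) is just a cleaner articulation of the paper's ``three values $0$, $\geq 9$, $\leq -10$ on one geodesic contradict convexity.''
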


\begin{proof}
Let $\zeta$ be a point in the visual boundary and let $c$ be a geodesic ray with the bounded geodesic image property starting at $\go$ with $c(\infty)=\zeta$. Every horofunction $h$ associated to $c$ is of the form $h=b_c + \nn$ for some constant $\nn \geq 0$. In light of Remark \ref{rmk:horospheres}, as horospheres of $b_c$ and $b_c+\nn$ are the same, it suffices to prove the assertion for the special case where $\nn=0.$ If $H_n$ is a sequence of $h$-horospheres, and $x_n \in H_n$, then, by definition of $H_n$, we have $h(x_n) \rightarrow - \infty.$ This implies that $x_n$ is unbounded, and hence, using the fact that $X$ is proper, some subsequence, $c''_{n_k}=[\go,x_{n_k}] \rightarrow c'$ for some geodesic ray $c'$ with $c'(0)=\go$ (recall the definition of convergence in the visual boundary following Definition \ref{def: visual topology}). We claim that $c'(\infty)=c(\infty)=\zeta$. This follows almost immediately using Lemma \ref{key to visibility} and the fact that $h$ is a convex 1-Lipschitz map (Lemma \ref{lem: convexity of horofunctions}). More precisely, suppose $c'(\infty) \neq c(\infty)$, since $h(x_{n_k}) \rightarrow -\infty,$ there exists some $m_1$ such that for all $k \geq m_1,$ we have $h(x_{n_k}) \leq -10$. On the other hand, by Lemma \ref{key to visibility}, we have $h(c'(s)) \rightarrow \infty$, and hence, there exists some $s_0$ such that $h(c'(s)) \geq 10$ for all $s \geq s_0.$ Now, since $c''_{n_k} \rightarrow c',$ there exists $m \geq m_1$ such that if $k \geq m,$ we have $c''_{n_k} \in U(c'(\infty),s_0,1).$ That is to say, $d(c''_{n_k}(t), c'(t))<1$ for all $t \in [0,s_0].$ In particular,  $d(c''_{n_k}(s_0), c'(s_0))<1.$ Using Lemma \ref{lem: convexity of horofunctions}, since the function $h$ is 1-Lipschitz, we have $|h(c''_{n_k}(s_0)-h(c'(s_0))|\leq d(c''_{n_k}(s_0), c'(s_0))<1 $. Hence, we have 
$$h(c''_{n_k}(s_0)) \geq h(c'(s_0))-1 \geq 10-1=9.$$ Since $m$ was chosen to be larger than or equal to $m_1,$ we also have $h(x_{n_k}) \leq -10$ for all $k \geq m.$ Consider the single geodesic $c''_{n_m}=[\go, x_{n_m}]$, for the points $\go, c''_{n_m}(s_0), x_{n_m} \in [\go,x_{n_m}]$, the values of the function $h$ satisfy $h(\go)=0,\,h(c''_{n_m}(s_0)) \geq 9$ and $h (x_{n_m}) \leq -10$ which contradicts convexity of $h$, therefore $c'(\infty)=c(\infty).$ This shows that every convergent subsequence of $x_n$ must converge to $c,$ and since there exists at least one convergent subsequence (as $X$ is proper), we conclude $x_n \rightarrow c.$

\end{proof}

\begin{remark}
Notice that convergence of horospheres is a hyperbolicity phenomena. For example, in $\mathbb{R}^2$ if one takes the geodesic ray $c$ to be the positive $y$-axis, then the associated horospheres, which are all  hyperplanes perpendicular to $c$, do not converge since we have two different sequences (the one in black and the one in green in Figure \ref{horospheres in plane do not converge}) living on the same set of horospheres yet defining different points in $\partial \mathbb{R}^2$. However, as shown in Figure \ref{horospheres in hyperbolic plane do converge}, if we consider horoshperes centered at $\zeta$ in the Poincare disk model, we can see that any convergent sequence living on those horospheres must converge to $\zeta$.
\end{remark}

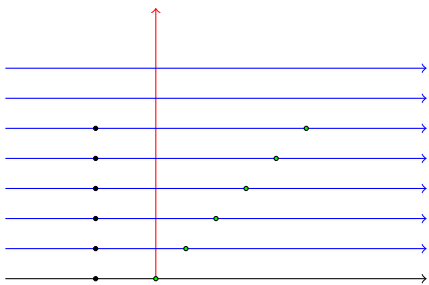
\begin{figure}
    \centering

\begin{tikzpicture}[scale=.4]
\draw[thin,->] (-5,0) -- (9,0);
\draw[thin, red, ->] (0,0) -- (0,9);

\draw[thin, blue, ->] (-5,1) -- (9,1);
\draw[thin, blue, ->] (-5,2) -- (9,2);
\draw[thin, blue, ->] (-5,3) -- (9,3);
\draw[thin, blue, ->] (-5,4) -- (9,4);
\draw[thin, blue, ->] (-5,5) -- (9,5);
\draw[thin, blue, ->] (-5,6) -- (9,6);
\draw[thin, blue, ->] (-5,7) -- (9,7);

\draw [fill=green] (0,0) circle (.07cm);
\draw [fill=green] (1,1) circle (.07cm);
\draw [fill=green] (2,2) circle (.07cm);
\draw [fill=green] (3,3) circle (.07cm);
\draw [fill=green] (4,4) circle (.07cm);
\draw [fill=green] (5,5) circle (.07cm);

\draw [fill=black] (-2,0) circle (.07cm);
\draw [fill=black] (-2,1) circle (.07cm);
\draw [fill=black] (-2,2) circle (.07cm);
\draw [fill=black] (-2,3) circle (.07cm);
\draw [fill=black] (-2,4) circle (.07cm);
\draw [fill=black] (-2,5) circle (.07cm);

\end{tikzpicture}

\caption{ The blue lines are the horospheres of $b_c$ where $c$ is the positive red ray in $\mathbb{R}^2$. Horospheres of $\mathbb{R}^2$ do not converge.}\label{horospheres in plane do not converge}

\end{figure}

\begin{figure}
    \centering

\begin{tikzpicture}[scale=.4]

\draw[very thick,red] (0,0) circle (3cm);


\draw[very thick,blue] (0,2.6) circle (.4cm);
\draw[very thick,blue] (0,2.3) circle (.7cm);
\draw[very thick,blue] (0,2) circle (1cm);
\draw[very thick,blue] (0,1.7) circle (1.3cm);
\draw[very thick,blue] (0,1.4) circle (1.6cm);
\draw[very thick,blue] (0,1.1) circle (1.9cm);

\draw[thick,fill=black] (0,3) circle (.1cm);

\node[above] at (.5,3) {$\zeta$};

\end{tikzpicture}

\caption{Horospheres in $\mathbb{H}^2$ are convergent.} \label{horospheres in hyperbolic plane do converge}
\end{figure}
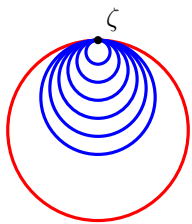

In particular, the conclusion holds for $\kappa$-contracting geodesic rays:

\begin{corollary}[Sublinearly contracting horospheres are convergent] Let $X$ be a proper CAT(0) space and let $\zeta \in \partial X$. Suppose that $\zeta=c(\infty)$ for some geodesic ray $c$ which is $\kappa$-contracting. If $h$ is a horofunction associated to $c$, then any sequence of $h$-horospheres must converge to $\zeta$.

\end{corollary}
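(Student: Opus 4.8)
The plan is to deduce this directly from the two preceding results. First I would invoke Lemma~\ref{lem: contracting implies BGI}, which asserts that any $\kappa$-contracting geodesic ray $c$ with $c(0)=\go$ satisfies the bounded geodesic image property: the projection $\pi_c(c')$ of any other geodesic ray $c'$ issuing from $\go$ is bounded, and more generally any ball centered on $c$ and disjoint from $c$ projects to a bounded subset of $c$. Since convergence of horospheres is insensitive to replacing a horofunction by one differing from it by a constant (Remark~\ref{rmk:horospheres}), and since the hypothesis only pins down the endpoint $\zeta=c(\infty)$, there is no loss in assuming $c$ starts at the fixed base point $\go$, so that Lemma~\ref{lem: contracting implies BGI} applies as stated.

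With the bounded geodesic image property in hand, I would apply Lemma~\ref{lem: BGI gives convergence of horospheres} verbatim. For any horofunction $h$ associated to $c$ and any sequence of $h$-horospheres $\{H_n\}_{n\in\mathbb{N}}$ with $x_n\in H_n$, one has $h(x_n)\to-\infty$, hence $\Norm{x_n}\to\infty$; properness of $X$ yields a subsequence with $[\go,x_{n_k}]$ converging in the visual topology to a geodesic ray $c'$ from $\go$, and the argument combining Lemma~\ref{key to visibility} (giving $h(c'(s))\to\infty$) with convexity and the $1$-Lipschitz property of $h$ forces $c'(\infty)=\zeta$. As every convergent subsequence of $(x_n)$ limits to $\zeta$ and at least one such subsequence exists, $x_n\to\zeta$.

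I do not anticipate any genuine obstacle here: the corollary is simply the specialization of Lemma~\ref{lem: BGI gives convergence of horospheres} obtained by strengthening the hypothesis ``bounded geodesic image property'' to ``$\kappa$-contracting,'' the implication between the two being exactly Lemma~\ref{lem: contracting implies BGI}. The only point worth recording explicitly is that the proof of Lemma~\ref{lem: BGI gives convergence of horospheres} never used $\kappa$-contraction beyond this consequence, which is precisely why the corollary follows with no further work.
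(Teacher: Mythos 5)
Your proposal is correct and coincides exactly with the paper's own proof: invoke Lemma~\ref{lem: contracting implies BGI} to get the bounded geodesic image property for the $\kappa$-contracting ray $c$, then apply Lemma~\ref{lem: BGI gives convergence of horospheres}. The additional recapitulation of the latter lemma's argument is accurate but not needed.
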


\begin{proof}
Using Lemma \ref{lem: contracting implies BGI}, $\kappa$-contracting geodesics satisfy the bounded geodesic image property. Furthermore, Lemma \ref{lem: BGI gives convergence of horospheres} shows the desired statement for geodesic rays with the bounded geodesic image property.
\end{proof}
We remark that the above corollary is part (1) of Theorem \ref{main theorem}. The following lemma states that for any horofunction $h$, if $c,c'$ are geodesic rays with $c(\infty)=c'(\infty),$ then the restriction of $h$ to $im(c)$ is bounded above if and only if the restriction of $h$ to $im(c')$ is bounded above.

\begin{lemma}\label{bounded above independence of representative}
Let $X$ be a proper CAT(0) space and let $h$ be horofunction. If $c,c'$ are geodesic rays with $c(\infty)=c'(\infty),$ then 

\begin{center}
    $h|_{im(c)}$ is bounded above iff   $h|_{im(c')}$ is bounded above.
\end{center}
\end{lemma}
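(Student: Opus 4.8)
The plan is to use the fact that $c$ and $c'$ are asymptotic, so their images stay at bounded Hausdorff distance — or at least, the key point is that since $c(\infty)=c'(\infty)$, for every $s$ the point $c'(s)$ is within a uniformly bounded distance of $\mathrm{im}(c)$, and vice versa. More precisely, by convexity of the distance function (Lemma 2.1(3)) applied to the geodesic rays $c$ and $c'$ starting at possibly different points but with the same endpoint, the function $t \mapsto d(c(t), \mathrm{im}(c'))$ is bounded; this is a standard consequence of the definition of the equivalence relation in Definition \ref{def: visual topology} together with convexity, and in the case $c(0)=c'(0)$ one gets $d(c(t),c'(t)) \le d(c(0),c'(0)) = 0$ is false in general, so I would instead argue via the $\nn$ from the equivalence class: there is $\nn \ge 0$ with $d(c(t),c'(t)) \le \nn$ for all $t$ after reparametrizing one of them, or more robustly, $\sup_t d(c'(t),\mathrm{im}(c)) =: D < \infty$.

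First I would fix the constant $D = \sup_{t \ge 0} d(c'(t), \mathrm{im}(c))$ and note it is finite because $c$ and $c'$ are asymptotic (this uses properness and the structure of $\partial X$, or just the definition of the equivalence relation composed with convexity of $t \mapsto d(c'(t),c)$, which is a convex bounded function on $[0,\infty)$ hence bounded). By symmetry $\sup_{s \ge 0} d(c(s),\mathrm{im}(c')) =: D' < \infty$ as well. Then, assuming $h|_{\mathrm{im}(c)}$ is bounded above by some $M$, for each $s$ pick $t_s$ with $d(c'(s), c(t_s)) \le D$; since $h$ is $1$-Lipschitz (Lemma \ref{lem: convexity of horofunctions}), $h(c'(s)) \le h(c(t_s)) + d(c'(s),c(t_s)) \le M + D$, so $h|_{\mathrm{im}(c')}$ is bounded above by $M+D$. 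The reverse implication is identical with the roles of $c$ and $c'$ swapped and $D'$ in place of $D$. That completes the argument.

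The only genuine point requiring care — and the step I would flag as the main obstacle — is justifying that $\sup_{t} d(c'(t), \mathrm{im}(c)) < \infty$ when $c,c'$ are merely asymptotic geodesic rays (not necessarily co-starting). I would handle this by first replacing $c'$ with the unique geodesic ray $c''$ starting at $c(0)$ with $c''(\infty) = c'(\infty) = c(\infty)$, as guaranteed by Proposition 8.2 of \cite{BH1} cited in the text; by the same $1$-Lipschitz argument above it suffices to prove the lemma for the pair $(c, c'')$, and by uniqueness of co-starting representatives of a boundary point (which forces $c'' = c$ up to parametrization — indeed two geodesic rays from the same point in a CAT(0) space representing the same boundary point coincide), the statement becomes trivial, OR, if one does not want to invoke that uniqueness, one observes directly that $t \mapsto d(c''(t), c(t))$ is convex (Lemma 2.1(3)) and bounded (by the equivalence relation of Definition \ref{def: visual topology}), hence nonincreasing, hence bounded by $d(c''(0),c(0)) = 0$. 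Either way the bounded-Hausdorff-distance claim reduces to routine CAT(0) convexity, and the Lipschitz transfer of the "bounded above" property is then immediate.
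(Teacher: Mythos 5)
Your argument is correct and is essentially the paper's proof: the paper disposes of this lemma in one line by noting that horofunctions are $1$-Lipschitz, the implicit point being exactly the one you spell out, namely that $c(\infty)=c'(\infty)$ means (by Definition \ref{def: visual topology}) that $d(c(t),c'(t))\leq \nn$ for all $t$, so boundedness above transfers with loss at most $\nn$. The only superfluous part of your write-up is the worry about finiteness of $\sup_t d(c'(t),\mathrm{im}(c))$ --- this is built into the paper's definition of the equivalence relation, so no convexity or reduction to co-starting representatives is needed.
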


\begin{proof}
The proof is immediate as horofunctions are $1$-Lipschitz (Lemma \ref{lem: convexity of horofunctions}).
\end{proof}

\begin{lemma}\label{lem: bounded diameter}
Let $c, c'$ be distinct geodesic rays starting at $\go$ and let $h,h'$ be two  horofunctions corresponding to $c,c'$ respectively. If $B$ is an $h$-horoball and $B'$ is an $h'$-horoball, then $B \cap B'$ is a bounded set provided that $c$ has the bounded geodesic image property.
\end{lemma}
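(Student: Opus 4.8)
The plan is to argue by contradiction. First I would fix notation: write $B=h^{-1}((-\infty,-k])$ and $B'=(h')^{-1}((-\infty,-k'])$, assume $B\cap B'$ is unbounded, and choose points $x_n\in B\cap B'$ with $\Norm{x_n}\to\infty$. Since $X$ is proper, after passing to a subsequence the geodesic segments $[\go,x_n]$ converge, uniformly on compact sets, to a geodesic ray $c''$ with $c''(0)=\go$ — exactly the compactness step used in the proof of Lemma~\ref{lem: BGI gives convergence of horospheres}. The whole argument then hinges on pinning down how $h$ and $h'$ behave along $c''$.

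The crucial step is to show that both $h$ and $h'$ are bounded above on $im(c'')$; precisely, $h(c''(t))\le h(\go)$ and $h'(c''(t))\le h'(\go)$ for every $t\ge 0$. Fix $t\ge 0$; for $n$ with $\Norm{x_n}>t$, let $y_n:=[\go,x_n](t)$ be the point of $[\go,x_n]$ at distance $t$ from $\go$, which sits at parameter $t/\Norm{x_n}$ along that geodesic. Using convexity of $h$ (Lemma~\ref{lem: convexity of horofunctions}) and $h(x_n)\le -k$,
\[
h(y_n)\ \le\ \left(1-\frac{t}{\Norm{x_n}}\right)h(\go)+\frac{t}{\Norm{x_n}}\,h(x_n)\ \le\ \left(1-\frac{t}{\Norm{x_n}}\right)h(\go)+\frac{t}{\Norm{x_n}}(-k).
\]
With $t$ fixed and $n\to\infty$, the right-hand side tends to $h(\go)$, while $h(y_n)\to h(c''(t))$ because $h$ is $1$-Lipschitz and $y_n\to c''(t)$; hence $h(c''(t))\le h(\go)$. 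Running the identical computation with $h'$ and $-k'$ in place of $h$ and $-k$ (this uses $x_n\in B'$) gives $h'(c''(t))\le h'(\go)$.

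Next I would identify the endpoint of $c''$. If $c''(\infty)\ne c(\infty)$, then $c$ and $c''$ are distinct geodesic rays starting at $\go$ and $c$ has the bounded geodesic image property, so Lemma~\ref{key to visibility}(3) applied to the pair $(c,c'')$ gives $h(c''(s))\to\infty$ as $s\to\infty$, contradicting $h(c''(t))\le h(\go)$ for all $t$. Hence $c''(\infty)=c(\infty)$. Since $c$ and $c''$ now share an endpoint, Lemma~\ref{bounded above independence of representative} upgrades ``$h'|_{im(c'')}$ is bounded above'' to ``$h'|_{im(c)}$ is bounded above''. But $c$ has the bounded geodesic image property and $c\ne c'$, so Lemma~\ref{key to visibility}(3) applied to the pair $(c,c')$ gives $h'(c(t))\to\infty$ as $t\to\infty$ — contradicting boundedness of $h'|_{im(c)}$. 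This contradiction forces $B\cap B'$ to be bounded, and combined with Lemma~\ref{lem: contracting implies BGI} it yields part~(2) of Theorem~\ref{main theorem}.

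The step I expect to be the main obstacle is the convexity estimate in the second paragraph. Unlike the situation of Lemma~\ref{lem: BGI gives convergence of horospheres}, the hypothesis $x_n\in B$ only provides the weak bound $h(x_n)\le -k$ rather than $h(x_n)\to-\infty$, so convergence of horospheres cannot be invoked directly; convexity of $h$ along $[\go,x_n]$ is exactly what converts that weak bound into the statement that the limiting direction $c''$ stays below $h(\go)$, after which everything is routine bookkeeping with the already-established lemmas.
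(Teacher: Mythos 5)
Your proof is correct, and it follows the same overall strategy as the paper (extract a limit geodesic ray from an unbounded sequence in $B\cap B'$, show $h$ and $h'$ are both bounded above along it, then contradict Lemma~\ref{key to visibility}), but the key step is executed differently. The paper bases the segments at a point $x_0\in B\cap B'$ and uses convexity of the \emph{horoballs} to conclude that the limit ray $b$ lies entirely in $B\cap B'$ (implicitly invoking the fact that an unbounded closed convex subset of a proper CAT(0) space contains a ray from any of its points); it then needs Lemma~\ref{bounded above independence of representative} to transfer boundedness of $h,h'$ from $b$ to the asymptotic ray $b'$ based at $\go$. You instead base the segments at $\go$ and use convexity of the \emph{horofunctions} along $[\go,x_n]$ to show directly that $h(c''(t))\le h(\go)$ and $h'(c''(t))\le h'(\go)$; this is a genuinely useful variant, since your limit ray already starts at $\go$ (so Lemma~\ref{bounded above independence of representative} becomes superfluous — once $c''(\infty)=c(\infty)$ and $c''(0)=c(0)=\go$, uniqueness of rays forces $c''=c$ outright) and you never need the ray-extraction-from-a-convex-set fact. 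The small price is the extra limiting computation in your second paragraph, which you carry out correctly: the weak bound $h(x_n)\le -k$ plus $t/\Norm{x_n}\to 0$ is exactly enough.
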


\begin{proof}
 Since the distance function in a CAT(0) space is convex, every horoball must be a convex set, and hence $B \cap B'$ is convex. If $B \cap B'$ is an unbounded set, then, using the fact that $X$ is proper, we get an infinite sequence of points $x_n \in B \cap B'$ such that for $x_0 \in B\cap B'$, the sequence of geodesic segments $[x_0, x_n]$ has a subsequence converging to some geodesic ray $b$ in $B \cap B'.$ Notice that by definition of $B$ and $B'$, the  horofunctions $h$ and $h'$ are both bounded above when restricted to the geodesic ray $b.$ By Lemma~\ref{bounded above independence of representative}, if $b'$ is the unique geodesic ray starting at $\go$ with $b(\infty)=b'(\infty),$ then $h$ and $h'$ are both bounded above when restricted to $b'.$ Using Lemma \ref{key to visibility}, we get that $c(\infty)=c'(\infty)=b'(\infty)$ which is a contradiction.
\end{proof}

The following is part (2) of Theorem \ref{main theorem}.

\begin{corollary}\label{cor: bounded diameter}
Let $c, c'$ be distinct geodesic rays starting at $\go$ and let $h,h'$ be two horofunctions corresponding to $c,c'$ respectively. If $B$ is an $h$-horoball and $B'$ is an $h'$-horoball, then $B \cap B'$ is a bounded set provided that $c$ is $\kappa$-contracting.
\end{corollary}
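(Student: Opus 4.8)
The plan is to deduce Corollary~\ref{cor: bounded diameter} from Lemma~\ref{lem: bounded diameter} by simply verifying that the hypothesis ``$c$ has the bounded geodesic image property'' is implied by the hypothesis ``$c$ is $\kappa$-contracting.'' This is exactly the content of Lemma~\ref{lem: contracting implies BGI}, part (2): a $\kappa$-contracting geodesic ray satisfies the bounded geodesic image property. So the entire argument is a one-line invocation of these two results.

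Concretely, I would argue as follows. Let $c, c'$ be distinct geodesic rays starting at $\go$, let $h, h'$ be horofunctions corresponding to $c, c'$ respectively, and let $B, B'$ be an $h$-horoball and an $h'$-horoball. Since $c$ is $\kappa$-contracting, Lemma~\ref{lem: contracting implies BGI} tells us that $c$ has the bounded geodesic image property. Now the hypotheses of Lemma~\ref{lem: bounded diameter} are satisfied verbatim, and that lemma yields that $B \cap B'$ is a bounded set. This completes the proof.

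There is no real obstacle here: the two-step chain ``$\kappa$-contracting $\Rightarrow$ bounded geodesic image $\Rightarrow$ bounded intersection of horoballs'' has been entirely set up by the preceding lemmas, and the corollary is precisely the specialization of Lemma~\ref{lem: bounded diameter} to the case most readers will care about. If anything, the only thing to be careful about is making sure the statement of the corollary matches that of the lemma except for replacing ``bounded geodesic image property'' by ``$\kappa$-contracting'' in the hypothesis, which it does.

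\begin{proof}
By Lemma~\ref{lem: contracting implies BGI}, every $\kappa$-contracting geodesic ray satisfies the bounded geodesic image property. In particular, since $c$ is $\kappa$-contracting, $c$ has the bounded geodesic image property. The conclusion now follows immediately from Lemma~\ref{lem: bounded diameter}.
\end{proof}
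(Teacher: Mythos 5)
Your proof is correct and is exactly the paper's argument: the paper also deduces the corollary by combining Lemma~\ref{lem: contracting implies BGI} (so $c$ has the bounded geodesic image property) with Lemma~\ref{lem: bounded diameter}. Nothing further is needed.
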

\begin{proof}
This is immediate using Lemma \ref{lem: bounded diameter} and Lemma \ref{lem: contracting implies BGI}.
\end{proof}

  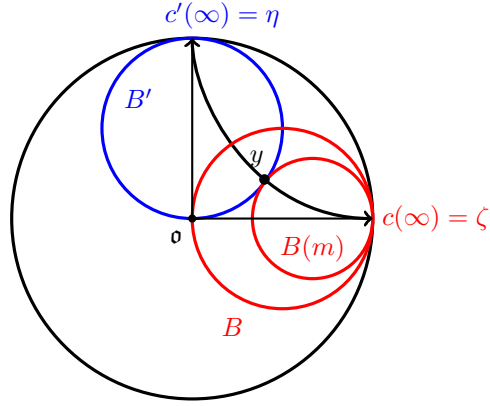
\begin{figure}\label{Hyperbolic case}
    \centering

\begin{tikzpicture}[scale=.8]

\draw[very thick,black] (0,0) circle (3cm);


\draw[very thick,black, <->] (0,3) .. controls ++(0,-1) and ++({-2*cos(0)},{-1.1*sin(0)}) ..({3*cos(0)},{2*sin(0)});



\draw[very thick,blue] (0,1.5) circle (1.50cm);

\draw[very thick,red] (2,0) circle (1cm);

\draw[very thick,red] (1.5,0) circle (1.5cm);

\node[left, red] at (1,-1.8) {$B$};

\node[left, red] at (2.7,-.5) {$B(m)$};

\node[left, blue] at (-.5,2) {$B'$};

\node[above, blue] at (.5,3) {$c'(\infty)=\eta$};

\node[right, red] at (3,0) {$c(\infty)=\zeta$};

\node[right, black] at (.8,1) {$y$};

\draw[thick,fill=black] (1.2,.65) circle (.07cm);

\draw[thick,->] (0,0) -- (3,0);
\draw[thick,->] (0,0) -- (0,3);

\node[left] at (0,-.3) {$\go$};

\draw[thick,fill=black] (0,0) circle (.05cm);

\end{tikzpicture}

\caption{Proof of visibility.} \label{fig: proof of visibility}

\end{figure}
\begin{corollary}\label{cor: The sublinear boundary is a visibility space}
Let $X$ be a proper CAT(0) space and let $\zeta, \eta$ be two distinct points of the visual boundary $\partial X$. If there exists a geodesic ray $c$ with the bounded geodesic image property satisfying $c(\infty)=\zeta$,  then there exists a geodesic line $l:(-\infty, \infty) \rightarrow X$ with $l(-\infty)=\zeta$ and $l(\infty)=\eta.$

\end{corollary}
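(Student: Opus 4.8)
The plan is to construct the bi-infinite geodesic line $l$ as a limit of finite geodesic segments joining approximations of $\zeta$ and $\eta$, and to use Corollary \ref{cor: bounded diameter} (or rather its analogue Lemma \ref{lem: bounded diameter} for the bounded geodesic image setting) to keep the limiting geodesic from escaping to infinity. Concretely, fix horofunctions $h$ associated to $c$ and $h'$ associated to some geodesic ray $c'$ with $c'(0)=\go$ and $c'(\infty)=\eta$. First I would record that for every $m\in\mathbb{N}$ there is a point realizing a geodesic segment that ``points toward $\zeta$ from deep inside the $h'$-horoball''; more precisely, consider the geodesic segments $\sigma_m=[c(m),c'(m)]$. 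Each $\sigma_m$ is a compact geodesic, and since $X$ is proper and $\overline{X}=X\cup\partial X$ is compact, after passing to a subsequence the segments $\sigma_m$ converge (uniformly on compacta, after suitable reparametrization about a chosen basepoint on them) to a complete geodesic $l:(-\infty,\infty)\to X$, provided we first check that the segments $\sigma_m$ pass within a bounded distance of a fixed point — otherwise the limit could degenerate to a ray or be empty.

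The key step, and the main obstacle, is exactly this non-degeneracy: I must produce a fixed bounded region that all (a subsequence of the) $\sigma_m$ meet. This is where the horoball intersection bound enters. Let $B=h^{-1}((-\infty,-k])$ and $B'=h'^{-1}((-\infty,-k])$ for a fixed $k$; by Lemma \ref{lem: bounded diameter}, $B\cap B'$ is bounded, say contained in a ball $R=\overline{B}(\go,\rho)$. I would argue that for $m$ large, the endpoint $c(m)$ lies in $B$ and $c'(m)$ lies in $B'$ (immediate from the definitions of Busemann functions: $h(c(t))\to-\infty$, $h'(c'(t))\to-\infty$), while $\go$ lies in neither $B$ nor $B'$ for $k>0$. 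Using convexity of the horofunctions $h$ and $h'$ along the geodesic $\sigma_m$, the set $\{t:\sigma_m(t)\in B\}$ and the set $\{t:\sigma_m(t)\in B'\}$ are each subintervals of the parameter interval; the first contains the endpoint near $c(m)$ and the second contains the endpoint near $c'(m)$. If these two subintervals were disjoint there would be a ``gap'' point on $\sigma_m$ outside $B\cup B'$, which is fine; what I actually want is the complementary statement — so instead I would run the argument with $h$ and $h'$ on the \emph{same} segment differently: along $\sigma_m=[c(m),c'(m)]$, the function $h$ is convex and tends to $-\infty$ at the $c(m)$-end and is bounded below at the $c'(m)$-end (indeed $h(c'(m))\to+\infty$ by Lemma \ref{key to visibility}(3)), and symmetrically for $h'$. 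Hence there is a unique point $y_m\in\sigma_m$ with $h(y_m)=h'(y_m)$, and at that point both values are bounded: $h(y_m)=h'(y_m)$ cannot be too negative (else $y_m\in B\cap B'\subseteq R$, done) and cannot be too positive (else, using $1$-Lipschitzness and that $h(c(m))\to-\infty$, $h'(c'(m))\to-\infty$, we force $y_m$ close to an endpoint, contradicting the sign change). Either way $y_m$ lies in a fixed bounded set, which is the required non-degeneracy anchor.

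With the anchor points $y_m$ confined to a compact set, properness of $X$ gives a subsequence with $y_m\to y_\infty$ and, reparametrizing each $\sigma_m$ by arclength so that $\sigma_m(0)=y_m$, the Arzelà–Ascoli / local compactness argument (as in Chapter II.1 of \cite{BH1}) yields a subsequential limit $l:(-\infty,\infty)\to X$ which is a complete geodesic line with $l(0)=y_\infty$; the lengths of the two sides of $\sigma_m$ from $y_m$ both tend to infinity because $d(y_m,c(m))\ge$ (something tending to $\infty$, since $h$ drops by an unbounded amount along that side and $h$ is $1$-Lipschitz) and similarly for the $c'$-side. Finally I would identify the two endpoints of $l$: the forward ray $l|_{[0,\infty)}$ is a limit of the subsegments $[y_m,c'(m)]$, which converge in $\overline X$ to $c'(\infty)=\eta$ (using that $c'(m)\to\eta$ and that the initial segments track $l$), so $l(\infty)=\eta$; symmetrically, $l|_{(-\infty,0]}$ is a limit of $[y_m,c(m)]$, giving $l(-\infty)=\zeta$. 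For this last identification I would invoke the definition of the visual topology following Definition \ref{def: visual topology} together with the fact that on a proper CAT(0) space a sequence of geodesic segments $[p,q_m]$ with $q_m\to\xi\in\partial X$ converges to the geodesic ray from $p$ to $\xi$. This completes the construction; Corollary \ref{introcor:visibility} then follows since sublinearly contracting rays have the bounded geodesic image property by Lemma \ref{lem: contracting implies BGI}.
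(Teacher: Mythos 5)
Your proposal is correct in outline but takes a genuinely different route from the paper. The paper follows the argument of $(4)\Rightarrow(1)$ in Proposition 9.35 of Chapter II.9 of \cite{BH1}: it considers the nested horoballs $B(a)=b_c^{-1}((-\infty,-a])$, uses Lemma \ref{lem: bounded diameter} to show that $m=\sup\{a\geq 0 : B(a)\cap B'\neq\emptyset\}$ is finite and, by properness, realized at some point $y$, and then concatenates the two rays $d,d'$ issuing from $y$ asymptotic to $c$ and $c'$, verifying directly via the Busemann functions and the intermediate value theorem that $d\cup d'$ is a geodesic line. You instead take a limit of the chords $\sigma_m=[c(m),c'(m)]$ and use Lemma \ref{lem: bounded diameter} only to anchor these chords in a fixed compact set; this is the classical ``limit of chords'' proof of visibility, and it trades the paper's extremality claims $b_c(d(t))=-t-m$ and $b_{c'}(d'(t))=-t$ for an Arzel\`a--Ascoli extraction plus the endpoint identification in the cone topology. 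Both proofs consume exactly the same input, namely boundedness of intersections of opposite horoballs (Lemma \ref{lem: bounded diameter}), which in turn rests on Lemma \ref{key to visibility}.

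One step of yours needs a cleaner justification: you assert that the equalizer point $y_m\in\sigma_m$ with $h(y_m)=h'(y_m)$ ``cannot be too positive'' because that would force $y_m$ close to an endpoint, contradicting the sign change; as stated this is not an argument. The claim is true, and the correct one-line reason (taking $h=b_c$ and $h'=b_{c'}$) is this: horofunctions are $1$-Lipschitz, so $h(y_m)\le h(c(m))+d(c(m),y_m)$ and $h'(y_m)\le h'(c'(m))+d(y_m,c'(m))$; summing and using $d(c(m),y_m)+d(y_m,c'(m))=d(c(m),c'(m))\le 2m$ together with $b_c(c(m))=b_{c'}(c'(m))=-m$ gives $2h(y_m)\le 0$. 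Hence $y_m$ lies in $b_c^{-1}((-\infty,0])\cap b_{c'}^{-1}((-\infty,0])$, which is bounded by Lemma \ref{lem: bounded diameter}, and your non-degeneracy anchor is secured without any case analysis. (Note also that $|h(y_m)|\le d(\go,y_m)$ on this bounded set is what guarantees that both halves of $\sigma_m$ emanating from $y_m$ have lengths tending to infinity, as you need for the limit to be a full line.) With this repair, the remainder of your argument --- extraction of the limit geodesic and identification of its two endpoints with $\zeta$ and $\eta$ via the definition of the cone topology --- is standard and correct.
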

\begin{proof} 
We remark that the following argument, while done in slightly different settings, is exactly the argument $((4)\Rightarrow (1))$ Proposition 9.35 in Chapter \RNum{2}.9  of \cite{BH1}. For completeness, we provide the proof.

Let $\zeta, \eta$ and $c$ be as in the statement of the theorem. Let $c'$ be the unique geodesic ray representing $\eta$ with $c(0)=c'(0)=\go$, in other words, we have $c(\infty)=\zeta$ and $c'(\infty)=\eta.$ Notice that if we set $B=b_{c}^{-1}((-\infty,0])$ and $B'=b_{c'}^{-1}((-\infty,0])$, then $B \cap B'$ has a bounded diameter, say $K,$ by Lemma \ref{lem: bounded diameter}. Also, if $a \geq 0,$ and we set \[B(a)=b_{c}^{-1}((-\infty,-a]),\]
 then 
 
\[B(a) \cap B' \subseteq B \cap B' \text{ for all } a \geq 0.\]
 Thus, the diameter of $B(a) \cap B'$ is bounded bounded by $K$, for each $a \geq 0.$ Also, notice that if $a >K$, then $B(a) \cap B'= \emptyset$, as if $y \in X$ is such that $b_c(y) \leq -a$, then 
 \[K<a \leq |b_{c}(y)|=|b_{c}(y)-b_{c}(\go)| \leq d(\go,y).\]
  Hence, \[
 B(a) \cap B' =\emptyset\]
  for any $a>K.$ This yields that $m=$Sup$\{a \geq 0 | B(a) \cap B' \neq \emptyset\}$ is a well defined quantity. Furthermore, since $X$ is proper, the supremum is realized. That is, there exists $y \in B(m) \cap B'$, see Figure \ref{fig: proof of visibility}. Hence, if we let $d, d'$ be the unique geodesic rays starting at $y$ with $c(\infty)=d(\infty)$ and $c'(\infty)=d'(\infty)$, then by our choice of $m$, we have \[b_c(d(t))=-t-m \text{ and } b_{c'}(d'(t))=-t.\] We claim that the union of the geodesic rays $d$ and $d'$ is a geodesic line. It suffices to show that there exists a large enough $t>0$ such that \[ 2t \geq d(d(t), d'(t)) \geq 2t.\]
   Notice that the left hand side of the inequality holds by the triangle inequality, so we need only to show that $d(d(t), d'(t)) \geq 2t.$ Let $[d(t), d'(t)]$ be the unique geodesic segment connecting $d(t)$ to $d'(t)$. Since $b_c (d(t))=-t-m$, Lemma \ref{key to visibility} and the intermediate value theorem imply the existence of some $z \in [d(t), d'(t)]$ such that $b_c (z)=-m$. Therefore, since $z$ lives in the horospheres $b^{-1}_c (-m)$, and by our choice of $m$,  we have $b_{c'} (z) \geq 0$. Hence, 
\begin{align*}
d(d(t), d'(t)) &=d(d(t), z)+d(z,d'(t))\\
                                  & \geq |b_{c}(d(t))-b_{c}(z)|+|b_{c'}(z)-b_{c'}(d'(t))| \\
                                  &\geq 2t.
\end{align*}
\end{proof}
Since $\kappa$-contracting geodesic rays satisfy the bounded geodesic image property (Lemma \ref{lem: contracting implies BGI}), we get the following.

\begin{corollary}\label{cor: strong visibility}
If $\zeta \in \partial X$ is such that $\zeta=c(\infty)$ for some $\kappa$-contracting geodesic ray $c$, then $\zeta$ is a visibility point of $\partial X$. That is to say, if $\zeta, \eta$ are distinct points in $\partial X$ such that $\zeta=c(\infty)$, for some $\kappa$-contracting geodesic ray $c,$ then there exists a geodesic line $l:(-\infty, \infty) \rightarrow X$ with $l(-\infty)=\zeta$ and $l(\infty)=\eta.$

\end{corollary}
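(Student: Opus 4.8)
The plan is to reduce Corollary \ref{cor: strong visibility} entirely to Corollary \ref{cor: The sublinear boundary is a visibility space}, since the latter already produces a bi-infinite geodesic line between $\zeta$ and any other boundary point, assuming only that $\zeta$ is represented by a geodesic ray with the bounded geodesic image property. So the only thing that needs to be supplied is the implication that $\kappa$-contracting rays have the bounded geodesic image property, and that is exactly the content of Lemma \ref{lem: contracting implies BGI}.

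Concretely, first I would let $\zeta, \eta \in \partial X$ be distinct with $\zeta = c(\infty)$ for some $\kappa$-contracting geodesic ray $c$. Using the base-point homogeneity of $\partial X$ (Proposition 8.2 of \cite{BH1}), I may assume $c$ starts at $\go$, and I let $c'$ be the unique geodesic ray with $c'(0) = \go$ and $c'(\infty) = \eta$; the rays are distinct since $\zeta \neq \eta$. Next I would invoke Lemma \ref{lem: contracting implies BGI} to conclude that $c$, being $\kappa$-contracting, satisfies the bounded geodesic image property. With that hypothesis verified, Corollary \ref{cor: The sublinear boundary is a visibility space} applies directly to the pair $(\zeta, \eta)$ and the ray $c$, yielding a geodesic line $l:(-\infty,\infty) \to X$ with $l(-\infty) = \zeta$ and $l(\infty) = \eta$. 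Since $\eta$ was an arbitrary point distinct from $\zeta$, this shows $\zeta$ is a visibility point of $\partial X$, which is the assertion.

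There is essentially no obstacle here: the corollary is a one-line deduction from two earlier results, and all the real work — constructing the supremum $m = \sup\{a \ge 0 \mid B(a) \cap B' \neq \emptyset\}$, showing it is finite and realized by properness, and checking that the two geodesic rays through the realizing point glue to a geodesic line via the Busemann-function estimate $d(d(t),d'(t)) \ge |b_c(d(t)) - b_c(z)| + |b_{c'}(z) - b_{c'}(d'(t))| \ge 2t$ — has already been carried out in the proof of Corollary \ref{cor: The sublinear boundary is a visibility space}, which in turn rests on Lemma \ref{lem: bounded diameter} and Lemma \ref{key to visibility}. If any subtlety arises, it would be only in confirming that the phrase ``$\kappa$-contracting'' in the statement is used in the sense of Definition \ref{Def:Contracting} with base point $\go$, matching the hypothesis of Lemma \ref{lem: contracting implies BGI}; this is immediate from the definitions. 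Thus the proof is simply: apply Lemma \ref{lem: contracting implies BGI}, then apply Corollary \ref{cor: The sublinear boundary is a visibility space}.
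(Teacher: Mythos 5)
Your proposal is correct and matches the paper's own proof exactly: both deduce the corollary by combining Lemma \ref{lem: contracting implies BGI} (contracting implies bounded geodesic image) with Corollary \ref{cor: The sublinear boundary is a visibility space}. The extra remarks about base points are harmless and do not change the argument.
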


\begin{proof}
Using Lemma \ref{lem: contracting implies BGI}, $\kappa$-contracting geodesic rays satisfy the bounded geodesic image property, and Corollary \ref{cor: The sublinear boundary is a visibility space} gives the desired statement for all such geodesic rays.

\end{proof}

We remark that Corollary \ref{cor: strong visibility} gives Corollary \ref{introcor:visibility} in the introduction.

The following is an immediate consequence of Ballmann and Buyalo [\cite{Ballmann2008}, Proposition 1.10].

\begin{corollary}\label{Cor: rank one isometries exist}
Let $G$ act geometrically on a proper CAT(0) space $X$ and let $\partial X$ denote the visual boundary of $X$. If $\partial X$ contains a visibility point, then $G$ contains a rank one isometry.
\end{corollary}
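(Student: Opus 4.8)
The final statement to prove is Corollary~\ref{Cor: rank one isometries exist}: if $G$ acts geometrically on a proper complete \CAT space $X$ and $\partial X$ contains a visibility point, then $G$ contains a rank one isometry. The plan is to cite and apply the characterization of rank one isometries due to Ballmann and Buyalo, reducing the statement to a purely boundary-dynamical fact about geometric actions.

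First I would recall the relevant definitions: an axial (hyperbolic) isometry $g$ of $X$ is \emph{rank one} if it admits an axis that does not bound a flat half-plane; equivalently, by standard results (e.g.\ Ballmann's lemma, or \cite{Ballmann2008}), $g$ is rank one precisely when its two fixed points $g^{+\infty}, g^{-\infty} \in \partial X$ are not the endpoints of a geodesic bounding a flat half-plane, and in that case these endpoints can be joined by a geodesic line in a ``rigid'' way. The key input is \cite[Proposition~1.10]{Ballmann2008}, which (in the geometric-action setting) states that if the limit set of $G$ contains a point $\zeta$ such that $\zeta$ is connected to every other boundary point by a geodesic line — i.e.\ a visibility point — then $G$ contains a rank one axial isometry. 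So the proof is essentially: verify the hypotheses of that proposition are met, then quote it.

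The steps, in order, would be: (1) Since $G$ acts geometrically (properly discontinuously, cocompactly, by isometries) on the proper complete \CAT space $X$, the limit set of $G$ is all of $\partial X$; in particular the visibility point $\zeta$ lies in the limit set of $G$. (2) Observe that a visibility point, by definition, is joined to every other point of $\partial X$ by a bi-infinite geodesic line in $X$; this is exactly the ``$\zeta$ is a hyperbolic-like boundary point'' hypothesis needed by Ballmann--Buyalo. (3) Invoke \cite[Proposition~1.10]{Ballmann2008} to conclude that $G$ contains an axial isometry $g$ whose axis does not bound a flat half-plane, i.e.\ a rank one isometry. One should note that the cocompactness of the action is what upgrades ``there exists a visibility point in $\partial X$'' to ``there exists a visibility point in the limit set,'' and also what guarantees that $G$ actually acts with enough dynamics on $\partial X$ to produce an axial element fixing a pair of points near $\zeta$.

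The main obstacle here is not a deep argument but rather making sure the hypotheses of \cite[Proposition~1.10]{Ballmann2008} are quoted in exactly the form that matches our setup — in particular, confirming that Ballmann and Buyalo's notion of the boundary point being a ``visibility point'' (or their terminology, possibly phrased in terms of the geodesic flow or of flat half-planes) coincides with the Definition given in this excerpt, and that their standing assumptions (properness, completeness, cocompact action, perhaps a minimality or non-elementarity clause that is automatic here since $\partial X$ has at least two points) are all satisfied. Since this is genuinely a direct citation, the proof in the paper should be one or two sentences: the limit set of $G$ is $\partial X$ by cocompactness, it contains a visibility point by hypothesis, and \cite[Proposition~1.10]{Ballmann2008} then yields a rank one isometry in $G$.
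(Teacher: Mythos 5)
Your proposal is correct and matches the paper exactly: the paper offers no proof body at all, simply declaring the corollary ``an immediate consequence'' of \cite[Proposition 1.10]{Ballmann2008}, which is precisely the citation you build your argument around. Your additional remarks (that cocompactness makes the limit set all of $\partial X$, and that the hypotheses of Ballmann--Buyalo should be checked against the paper's definition of visibility point) are reasonable due diligence but do not constitute a different route.
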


Hence, the previous two corollaries imply the following.

\begin{corollary}
Let $G$ be a group acting geometrically on a proper CAT(0) space $X$. If $\partial X$ contains a point $\zeta$  such that $\zeta=c(\infty)$ for some $\kappa$-contracting geodesic ray $c$, then $G$ contains a rank one isometry.
\end{corollary}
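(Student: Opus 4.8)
The plan is to chain together the two corollaries immediately preceding this statement. First I would invoke Corollary~\ref{cor: strong visibility}: since $\partial X$ contains a point $\zeta = c(\infty)$ for some $\kappa$-contracting geodesic ray $c$, that corollary (which follows from Corollary~\ref{cor: The sublinear boundary is a visibility space} via Lemma~\ref{lem: contracting implies BGI}, as $\kappa$-contracting rays have the bounded geodesic image property) tells us that $\zeta$ is a visibility point of $\partial X$. In particular $\partial X$ contains a visibility point.

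Next I would apply Corollary~\ref{Cor: rank one isometries exist}, which is the statement extracted from Ballmann--Buyalo \cite{Ballmann2008}, Proposition~1.10: if $G$ acts geometrically on a proper complete CAT(0) space $X$ and $\partial X$ contains a visibility point, then $G$ contains a rank one isometry. Combining these two facts yields the conclusion directly.

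Concretely, the proof is simply: \emph{By Corollary~\ref{cor: strong visibility}, the point $\zeta$ is a visibility point of $\partial X$; hence $\partial X$ contains a visibility point, and by Corollary~\ref{Cor: rank one isometries exist}, $G$ contains a rank one isometry.} There is essentially no obstacle here, as all the work has been done: Corollary~\ref{cor: strong visibility} has already established visibility of $\zeta$ from the contracting hypothesis (ultimately resting on the visibility construction in Corollary~\ref{cor: The sublinear boundary is a visibility space}, which builds the required bi-infinite geodesic line out of two opposing horoballs $B$, $B'$ whose intersection is bounded by Lemma~\ref{lem: bounded diameter}), and Corollary~\ref{Cor: rank one isometries exist} packages the Ballmann--Buyalo criterion. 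The only thing to be careful about is that the geometric action of $G$ forces $X$ to be complete and cocompact, so both cited corollaries genuinely apply; but completeness and properness are part of the hypotheses, so nothing further is needed. I would therefore keep the proof to one or two sentences.
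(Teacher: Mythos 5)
Your proof is correct and is essentially identical to the paper's, which simply observes that the statement follows by combining Corollary~\ref{cor: strong visibility} (the point $\zeta$ defined by a $\kappa$-contracting ray is a visibility point) with Corollary~\ref{Cor: rank one isometries exist} (the Ballmann--Buyalo criterion that a visibility point in $\partial X$ yields a rank one isometry). Nothing further is needed.
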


\subsection*{Data Availability Statement}

Data sharing not applicable to this article as no data sets were generated or analysed during the current study.

\bibliography{bibliography}{}
\bibliographystyle{plain}

\end{document}